\theoremstyle{definition}\newtheorem{proposition}{Proposition}
\theoremstyle{definition}\newtheorem{definition}{Definition}
\theoremstyle{definition}\newtheorem{theorem}{Theorem}
\theoremstyle{definition}\newtheorem{lemma}{Lemma}
\theoremstyle{definition}\newtheorem{remark}{Remark}
\numberwithin{equation}{section}
\numberwithin{theorem}{section}
\numberwithin{definition}{section}
\numberwithin{lemma}{section}
\numberwithin{figure}{section}
\newcommand{\rd}{\mathrm{d}}
\newcommand{\CalE}{\mathcal{E}}
\newcommand{\TOL}{\text{TOL}}
\newcommand{\Cov}{\text{Cov}}
\newcommand{\veps}{\varepsilon}
\DeclareMathOperator{\Tr}{Tr}
\DeclareMathOperator{\Span}{Span}
\DeclareMathOperator{\divop}{div}
\newcommand{\Amat}{\mathsf{A}}
\newcommand{\Bmat}{\mathsf{B}}
\newcommand{\Cmat}{\mathsf{C}}
\newcommand{\Rmat}{\mathsf{R}}
\newcommand{\Xmat}{\mathsf{X}}
\newcommand{\Ymat}{\mathsf{Y}}
\newcommand{\Zmat}{\mathsf{Z}}
\newcommand{\Smat}{\mathsf{S}}
\newcommand{\Qmat}{\mathsf{Q}}
\newcommand{\Umat}{\mathsf{U}}
\newcommand{\Imat}{\mathsf{I}}
\newcommand{\Pmat}{\mathsf{P}}
\newcommand{\Xcal}{\mathcal{X}}
\newcommand{\Ycal}{\mathcal{Y}}
\newcommand{\Zcal}{\mathcal{Z}}
\newcommand{\qlresolved}[1]{}
\newcommand{\jlresolved}[1]{}
\newcommand{\sjwresolved}[1]{}
\newcommand{\jledit}[1]{\textcolor{black}{#1}}
\newcommand{\steveedit}[1]{\textcolor{black}{#1}}
\newcommand{\keedit}[1]{\textcolor{black}{#1}}
\newcommand{\blueedit}[1]{\textcolor{black}{#1}}
\title[Randomized sampling for gFEM]{Randomized sampling for basis functions construction in
  generalized finite element methods}
\author{Ke Chen} 
\address{Mathematics Department, University of Wisconsin-Madison, 480 Lincoln Dr., Madison, WI 53706 USA.}
\email{ke@math.wisc.edu}
\author{Qin Li} 
\address{Mathematics Department, University of Wisconsin-Madison, 480 Lincoln Dr., Madison, WI 53706 USA.}
\email{qinli@math.wisc.edu}
\author{Jianfeng Lu}
\address{Department of Mathematics, Department of Physics and Department of Chemistry, Duke University, Box 90320, Durham, NC 27708 USA.}
\email{jianfeng@math.duke.edu}
\author{Stephen J. Wright} 
\address{Computer Sciences Department, University of Wisconsin-Madison, 1210 W Dayton St, Madison, WI 53706 USA.}
\email{swright@cs.wisc.edu}
\date{\today}
\thanks{The work of Q.L.~is supported in part by National Science Foundation under the grant
  DMS-1619778 and DMS-1750488. The work of J.L.~is supported in part by the National
  Science Foundation under grant DMS-1454939. The work of S.W.~is
  supported in part by NSF Awards IIS-1447449, 1628384, and 1634597
  and AFOSR Award FA9550-13-1-0138. K.C.,  Q.L., and S.W. are supported by TRIPODS: NSF 1740707.}
\begin{document}

\begin{abstract}
  In the \jledit{framework} of generalized finite element methods for
  elliptic equations with rough coefficients, efficiency and accuracy
  of the numerical method depend critically on the use of appropriate
  basis functions. This work explores several random sampling
  strategies \steveedit{that construct approximations to the} optimal
  \steveedit{set of basis functions of a given dimension}, and
  proposes a quantitative criterion to analyze and compare these
  sampling strategies.  \blueedit{Numerical evidence shows that the
    best results are achieved by two strategies, Random Gaussian and
    Smooth boundary sampling.}
\end{abstract}

\maketitle

\section{Introduction} \label{sec:intro}


\keedit{This paper considers techniques for constructing basis
  functions for generalized finite element methods applied to elliptic
  equations with rough coefficients. The elliptic partial differential
  equation is}
\begin{equation} \label{eq:elliptical}
\begin{cases}
-\divop\left(a(x)\nabla u(x)\right) = f(x)\,,\quad  & x\in\Omega;\\
u(x) = 0\,,\quad & x\in\partial\Omega,
\end{cases}
\end{equation}
with $f \in L^2(\Omega)$ and a \jledit{uniformly elliptic} coefficient
function $a \in L^{\infty}(\Omega)$, that is, there exist
$\alpha_{\ast}, \beta_{\ast} > 0$ such that
$a(x) \in [\alpha_{\ast}, \beta_{\ast}]$ for all $x \in \Omega$. Note
that we assume only $L^{\infty}$ regularity of $a$, so the coefficient
could be rather rough, which poses challenges to conventional
numerical methods, such as the standard finite element method with
local polynomial basis functions.

Numerical methods can be designed to take advantage of certain
analytical properties of the problem \eqref{eq:elliptical}. A
classical example is when $a$ is two-scale, that is,
$a(x) = a_0(x, \tfrac{x}{\veps})$ where $a_0(x, y)$ is $1$-periodic
with respect to its second argument. (Thus, $\veps$ characterizes
explicitly the small scale of the problem.) Using the theory of
homogenization~\cite{BLP,Papa75}, several numerical methods have been
proposed over the past decades to capture the homogenized solution of
the problem and possibly also to provide some microscopic
information. Approaches of this type include the multiscale finite
element method~\cite{EHW:00,HW_numerics:99,HWC99,HW97} and the
heterogeneous multiscale
method~\cite{EEngquist:03,EMingZhang:2005,MY06}.

While \steveedit{methods designed for numerical homogenization can be
  applied} to the cases of rough media ($a \in L^{\infty}$), the lack
of favorable structural properties often degrades the efficiency and
convergence rates. Various numerical methods have been proposed for
$L^{\infty}$ media, including the generalized finite element method
\cite{babuska_partition_1997}, upscaling based on harmonic coordinate
\cite{OZ07}, elliptic solvers based on $\mathcal{H}$-matrices
\cite{Bebendorf2007, Hackbusch2015}, and Bayesian numerical
homogenization \cite{Owhadi_bayesian}, to name just a few. Our work
\steveedit{uses the framework of the generalized finite element method
  (gFEM) of~\cite{babuska_partition_1997}}. The idea is to approximate
the local solution space by constructing good basis functions locally
and to use either the partition-of-unity or the discontinuous Galerkin
method to obtain a global discretization.

According to the partition-of-unity finite-element theory, which we
will recall briefly in Section~\ref{sec:preliminary}, the global error
is controlled by the accuracy of the numerical local solution
spaces. Thus, global performance of the method depends critically on
efficient preparation of accurate local solution spaces.
Towards this end, Babu\v{s}ka and Lipton \cite{BL11} studied the
Kolmogorov width of a finite dimensional approximation to the optimal
solution space, and showed that the Kolmogorov width decays almost
exponentially fast, as we \jledit{will} recall in
Section~\ref{sec:preliminary}.
The basis construction algorithm proposed in \cite{BL11} follows the
analysis closely: A full list of $a$-harmonic functions (up to
discretization) is obtained in each patch, and local basis functions
are obtained by a ``post-processing'' step of solving a generalized
eigenvalue problem to select modes with highest ``energy ratios''.
Since the roughness of $a$ necessitates a fine discretization in each
patch, and thus a large number of $a$-harmonic functions per patch,
the overall computational cost of this strategy \jledit{to construct
  local basis functions} is high.

Our work is based on the gFEM framework~\cite{babuska_partition_1997}
together with the concept of optimal local solution space via
Kolmogorov width studied in~\cite{BL11}. 
\blueedit{The idea of introducing random sampling or oversampling to construct local basis functions were studied in \cite{calo2016randomized,GE10,LLYE:12}, and they are shown to be computationally effective. However, a systematic investigation of random sampling in the contexts of numerical PDEs is in lack. There is no criterion that justifies the ``goodness" of basis functions constructed through random sampling. The main contribution of this paper is two-folded. We systematically examine these random sampling approaches and introduce a criterion that evaluates different sets of basis functions, and we furthermore propose a random projection method that obtains a set of $a$-harmonic basis functions automatically.}
Randomized algorithms have been shown to be
powerful in reducing computational complexities in looking for low
rank \jledit{factorization} of matrices. Since the generalized
eigenvalues decay almost exponentially, the local solution space is of
\steveedit{approximate low rank, and random sampling approaches can
  capture this space effectively.}
The efficiency of the approach certainly depends on the particular
random sampling strategy employed; we explore several strategies and
identify the most successful ones.

\blueedit{As mentioned above, the idea of random sampling or oversampling to
  construct local basis functions is not completely
  new. In~\cite{GE10}, the authors proposed to compute a generalized
  eigenvalue problem (using the stiffness matrix and the mass matrix)
  as a post-processing step for basis selection. Similar strategies
  have been considered in the discontinuous Galerkin
  framework~\cite{LLYE:12}, but these approaches require a full basis
  of local solutions.  The random sampling strategy is incorporated
  in~\cite{calo2016randomized} to improve efficiency in the offline
  stage.}

%

\blueedit{There are several important differences between our
  approaches and those of \cite{GE10,LLYE:12,calo2016randomized}.
  First, we provide a quantitative criterion for evaluating the
  efficiency and accuracy of different random sampling
  strategies. Second, we find that the best randomized sampling
  strategy is {\em not} necessarily based on randomly assigning
  boundary conditions, as done in~\cite{calo2016randomized}. As
  indicated by the proposed criterion, a good sampling strategy should
  eliminate boundary layers and maintain much of the ``energies'' of
  the samples in the interior. Third, instead of using stiffness-mass
  ratio as done in~\cite{GE10}, the selection process here is guided
  by the behavior of the restriction operator (see
  Section~\ref{sec:preliminary}), \blueedit{which is proved to be
    optimal in~\cite{BL11}.}  }

Other basis construction approaches based on gFEM framework have been
explored in the literature, mostly based on a similar offline-online
strategy. 
In the offline step, one prepares the solution space (either
local or global). In the online step, one assembles the basis through
the Galerkin framework (see, for example, \cite{MP14, OZ14,
  HouZhang17}). The random sampling strategy can be also explored in
these contexts. 


The organization of the rest of the paper is as follows. We review
preliminaries in Section~\ref{sec:preliminary}, including the basics
of basis construction and error analysis. In
Section~\ref{sec:methods}, we describe the random sampling framework,
and present a few particular sampling strategies.
We connect and compare our framework with the randomized singular
value decomposition (rSVD) in Section~\ref{sec:svd}. To compare the
various sampling approaches, we propose a criterion in
Section~\ref{sec:criteria}, 
\keedit{
according to which random sampling strategy with higher energies achieve smaller Kolmogorov distances to the optimal basis.}
\steveedit{Numerical examples in
  Section~\ref{sec:numerics} demonstrate the effectiveness of our
  approach.}

\blueedit{This paper only serves as the first step towards evaluating randomly constructed basis and there are many other choices and parameters that we do not fully investigate. One example is the ratio of the enlargement: a bigger enlarged domain gives faster decay in singular values, but the numerical cost is fairly high. These are left to future works.}

\section{\steveedit{Previous Results and Context}}\label{sec:preliminary}

Here we provide some preliminary results about the generalized finite
element method, \steveedit{including the concept of low-rank solution
  space, and review the construction of basis functions for the local
  solution space.}

\keedit{We restate the elliptic equation \eqref{eq:elliptical} as
  follows:}
\begin{equation}\label{eqn:elliptic}
\begin{cases}
\mathcal{L}u = -\text{div}\left(a(x)\nabla u(x)\right) = f(x)\,,\quad x\in\Omega;\\
u(x) = 0\,,\quad x\in\partial\Omega,
\end{cases}
\end{equation}
with $0<\alpha_\ast\leq a(x)\leq \beta_\ast$, where $\mathcal{L}$
denotes the elliptic operator. The weak formulation of
\eqref{eqn:elliptic} is
\begin{equation*}
  \langle a(x)\nabla u\,,\nabla v\rangle_{L^2(\Omega)} = \langle
  f\,,v\rangle_{L^2(\Omega)},
\end{equation*}
for all test functions $v$, where $\langle
f,g\rangle_{L^2(\Omega)} := \int_\Omega f(x)g(x)\rd{x}$.

In the Galerkin framework, one constructs the solution space
first. Given the following approximation space, \keedit{defined by
  basis functions $\phi_i$, $i=1,2,\dotsc,n$:
\begin{equation}\label{eqn:space}
\Span\{\phi_i\,,i=1,2,\dotsc n\}\,,
\end{equation}
we substitute the ansatz $U = \sum_{i=1}^nc_i\phi_i\,$
into~\eqref{eqn:elliptic} to obtain:}
\[
\sum_{j=1}^n\langle a(x)\nabla \phi_j\,,\nabla\phi_i\rangle_{L^2(\Omega)} c_j = \langle f\,,\phi_i\rangle_{L^2(\Omega)}\,.
\]
We write this system in matrix form as follows:
\begin{equation}\label{eqn:Acb}
\Amat\vec{c}=\vec{b}\,,
\end{equation}
where $\Amat$ is a symmetric matrix with entries $\Amat_{mn} = \langle
a\nabla\phi_m\,,\nabla\phi_n\rangle_{L^2(\Omega)}$, and $\vec{c}$
(with $\vec{c}_m = c_m$) is a list of coefficients to be determined. 
The right hand side is the load vector $\vec{b}$, with entries
$\vec{b}_m = \langle f\,,\phi_m\rangle_{L^2(\Omega)}$.

It is well known that the following quasi-optimality holds:
\[
\|u - U\|_{\CalE(\Omega)} \leq C\|u-\mathcal{P}u\|_{\CalE(\Omega)}\,,
\]
where $C$ is some constant depending on $\alpha_\ast$ and
$\beta_\ast$, and $\mathcal{P}u$ is the projection of the true
solution $u$ onto the space~\eqref{eqn:space}. Here the energy norm on
any subdomain $\omega \subset \Omega$ is defined by
\begin{equation} \label{eq:Enorm} \|v\|_{\CalE(\omega)} =
  \langle
  a\nabla v\,,\nabla v\rangle_{L^2(\omega)}^{1/2} := \left[
    \int_{\omega} a|\nabla v(x)|^2 \rd{x} \right]^{1/2}.
\end{equation}
Thus to guarantee small numerical error $\|u-U\|_{\CalE(\Omega)}$, we
require a set of basis functions that form a space for which
$\|u-\mathcal{P}u\|_{\CalE(\Omega)}$ is small.

The main difficulty of computing the elliptic equation with rough
coefficient is that a large number of basis functions is apparently
needed. When $a(x)$ is rough with $\veps$ as its smallest scale, for
standard piecewise affine finite elements, the mesh size $\Delta x$
needs
to resolve the smallest
scale, so that $\Delta x\ll\veps$ in each dimension. It follows that the
dimension $n$ of the system \eqref{eqn:Acb} is
$n =\mathcal{O}(1/\veps^d)\gg 1$, where $d$ is the spatial
dimension. The large size of stiffness matrix $\Amat$ and its large
condition number
(usually on the order of $\mathcal{O}(1/\varepsilon^{2})$)
make the problem expensive to solve using this approach.

The question is then whether it is possible to design a Galerkin space
for which $n$ is independent of $\veps$?  As mentioned in
Section~\ref{sec:intro}, the offline-online procedure makes this
approach feasible, as we discuss next.

\subsection{Generalized Finite Element Method}

The generalized Finite Element Method was one of the earliest methods
to utilize the offline-online procedure.  This approach is based on
the partition of unity. One first decomposes the domain $\Omega$ into
many small patches $\omega_i$, $i=1,2,\dotsc,m$,
that form an open cover of $\Omega$. Each patch $\omega_i$ is assigned
a partition-of-unity function $\nu_i$ that is zero outside $\omega_i$
and $1$ over most of the set $\omega_i$. Specifically, there are
positive constant $C$ such that
\begin{subequations} \label{eqn:PoU}
\begin{alignat}{2}
0 \le \nu_i(x) & \le 1, \quad && \mbox{for all $x \in \Omega$ and all
  $i=1,2,\dotsc,m$}, \\ 
\nu_i(x) &= 0, && \mbox{for all $x \in \Omega \setminus \omega_i$ and all $i=1,2,\dotsc,m$}, \\
\label{eq:PoU.3}
\max_{x \in \Omega} \, | \nabla \nu_i(x) | & \le \frac{C}{\text{diam}(\omega_i)}, \quad &&
\mbox{for all $i=1,2,\dotsc,m$}.
\end{alignat}
\end{subequations}
Moreover, we have
\begin{equation} \label{eq:PoU.sum}
\sum_{i=1}^m\nu_i(x) = 1, \quad  \mbox{for all $x \in \Omega$\,.}
\end{equation}

In the offline step, basis functions \keedit{$\phi_{i,j}$,
  $i=1,2,\dotsc,m$, $j=1,2,\dotsc,n_i$ are constructed for each patch
  $\omega_i$, where $n_i$ is the number of basis functions in patch
  $i$.} We denote the numerical local solution space in patch
$\omega_i$ by:
\begin{equation}\label{eqn:local_space}
\Phi_{[i]} = \Span\{\phi_{i,j} \,,j = 1,2,\dotsc,n_i\}\,.
\end{equation}
In the online step, the Galerkin formulation is used, with the space
in~\eqref{eqn:space} replaced by:
\begin{equation}\label{eqn:global_space}
\Phi := \bigoplus_{i=1,2,\dotsc,m} \Phi_{[i]} \nu_i= \Span\{\phi_{i,j}\nu_i \,,i = 1,2,\dotsc,m, \; j = 1,2,\dotsc,n_i\}\,.
\end{equation}
Details can be found in~\cite{babuska_partition_1997}.

The total number of basis functions is $\sum_{i=1}^mn_i$. If all
$n_i$, $i=1,2,\dotsc,m$ are bounded by a modest constant, the
dimension of the space $\Phi$ is of order $m$, so the computation in
the online step is potentially inexpensive. It is proved
in~\cite{babuska_partition_1997} that the total approximation error is
governed by the sum of all local approximation errors.

\begin{theorem}\label{thm:global_error}
Denote by $u$ the solution to~\eqref{eqn:elliptic}. Suppose $\{
\omega_i \}_{i=1,2,\dotsc,m}$ forms an open cover of $\Omega$ and let
$\{ \nu_i \}_{i=1,2,\dotsc,m}$ denote the set of partition-of-unity
functions 
 defined in~\eqref{eqn:PoU}. \steveedit{If the solution can be
   approximated well by $\zeta_i\in \Phi_{[i]}$ in each patch
   $\omega_i$,
the global error is small too.} Specifically, if we assume that
\begin{equation}
\| u-\zeta_i\|_{L^2(\omega_i)} \leq \varepsilon_1(i) \quad \text{and}
\quad\|u-\zeta_i\|_{\mathcal{E}(\omega_i)}\leq \varepsilon_2(i)\,, \quad
i=1,2,\dotsc,m,
\end{equation}
and define
\[
\zeta(x) = \sum_{i=1}^m \zeta_i(x)\nu_i(x)\,,
\]
then $\zeta(x)\in H^1(\Omega)$, and for the constant $C$
defined in~\eqref{eqn:PoU}, we have
\begin{equation*}
\|u - \zeta\|_{L^2(\Omega)}\leq \max_i \|\nu_i\|_\infty \left(\sum_{i=1}^m \varepsilon_1^2(i) \right)^{1/2}\,
\end{equation*}
and
\begin{equation*}
\|u - \zeta\|_{\mathcal{E}(\Omega)}\leq C \left(\sum_{i=1}^m \frac{\varepsilon_1^2(i)}{\text{diam}^2(\omega_i)} +\max_i \|\nu_i\|_\infty^2\sum_{i=1}^m\varepsilon_2^2(i)\right)^{1/2}\,.
\end{equation*}
\end{theorem}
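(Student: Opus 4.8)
The plan is to reduce both global estimates to the local hypotheses via the partition-of-unity identity \eqref{eq:PoU.sum}. Since $\sum_{i=1}^m \nu_i \equiv 1$ on $\Omega$, one may write $u = \sum_{i=1}^m u\,\nu_i$, hence
\[
u - \zeta = \sum_{i=1}^m (u - \zeta_i)\,\nu_i .
\]
That $\zeta\in H^1(\Omega)$ follows by noting that each $\zeta_i\in\Phi_{[i]}\subset H^1(\omega_i)$, each $\nu_i$ is Lipschitz on $\Omega$ and vanishes on $\partial\omega_i$ (by the second line of \eqref{eqn:PoU} together with continuity, using $u|_{\partial\Omega}=0$ where $\partial\omega_i$ meets $\partial\Omega$), so each product $(u-\zeta_i)\nu_i$ extends by zero to a function in $H^1(\Omega)$, and a finite sum of such functions is again in $H^1(\Omega)$. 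The essential idea for the norm bounds is then to distribute the global norms over the patches by a \emph{weighted} Cauchy--Schwarz inequality, using the $\nu_i$ themselves as weights.

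For the $L^2$ bound I would apply Cauchy--Schwarz pointwise with weights $\nu_i$, using $\sum_i\nu_i=1$:
\[
|u(x)-\zeta(x)|^2 = \Bigl|\sum_i(u-\zeta_i)\nu_i\Bigr|^2 \le \Bigl(\sum_i\nu_i\Bigr)\Bigl(\sum_i |u-\zeta_i|^2\nu_i\Bigr) = \sum_i|u-\zeta_i|^2\nu_i .
\]
Integrating over $\Omega$, using $0\le\nu_i\le\|\nu_i\|_\infty$, that $\nu_i$ is supported in $\omega_i$, and the hypothesis $\|u-\zeta_i\|_{L^2(\omega_i)}\le\varepsilon_1(i)$, gives $\|u-\zeta\|_{L^2(\Omega)}^2 \le \bigl(\max_i\|\nu_i\|_\infty\bigr)\sum_i\varepsilon_1^2(i)$, which is the claimed inequality up to a factor that equals $1$ in the standard construction where the $\nu_i$ attain the value $1$. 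The point of weighting by $\nu_i$ and invoking $\sum_i\nu_i=1$ is that it removes any dependence on how many patches overlap at a given point.

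For the energy estimate, differentiate $u-\zeta=\sum_i(u-\zeta_i)\nu_i$ by the Leibniz rule and split
\[
\nabla(u-\zeta) = \underbrace{\sum_i\nu_i\,\nabla(u-\zeta_i)}_{=:\,T_1} \;+\; \underbrace{\sum_i(u-\zeta_i)\,\nabla\nu_i}_{=:\,T_2}.
\]
The term $T_1$ is handled exactly like the $L^2$ term — weighted Cauchy--Schwarz with weights $\nu_i$, then $\|u-\zeta_i\|_{\mathcal{E}(\omega_i)}\le\varepsilon_2(i)$ — contributing at most a constant times $\max_i\|\nu_i\|_\infty^2\sum_i\varepsilon_2^2(i)$ to $\int_\Omega a|\nabla(u-\zeta)|^2$. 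For $T_2$ I would use the gradient control \eqref{eq:PoU.3}, $|\nabla\nu_i|\le C/\text{diam}(\omega_i)$, together with $a\le\beta_\ast$, to bound its contribution by a constant times $\sum_i\varepsilon_1^2(i)/\text{diam}^2(\omega_i)$; combining via $\|u-\zeta\|_{\mathcal{E}(\Omega)}^2\le 2\int a|T_1|^2+2\int a|T_2|^2$ and absorbing $\alpha_\ast,\beta_\ast$, the constant of \eqref{eqn:PoU}, and the overlap number of the cover into one constant $C$ yields the stated inequality. The only place needing real care — and the reason the $L^2$-type local error $\varepsilon_1$, not $\varepsilon_2$, appears in the energy bound — is the cross term $T_2$: here the weights $\nabla\nu_i$ sum to $0$ rather than $1$, so the clean weighted Cauchy--Schwarz device fails and one must instead invoke bounded overlap of $\{\omega_i\}$ (a standard property of gFEM covers) together with the diameter scaling of $|\nabla\nu_i|$ to produce exactly the $\varepsilon_1^2(i)/\text{diam}^2(\omega_i)$ contribution.
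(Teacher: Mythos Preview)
The paper does not actually prove this theorem: it is quoted from Babu\v{s}ka and Melenk \cite{babuska_partition_1997} and stated without proof, so there is no ``paper's own proof'' to compare against. Your proposal is essentially the standard argument from that reference --- the partition-of-unity identity $u-\zeta=\sum_i(u-\zeta_i)\nu_i$, pointwise weighted Cauchy--Schwarz for the $L^2$ part, and the Leibniz split $T_1+T_2$ for the energy part with the $|\nabla\nu_i|\le C/\mathrm{diam}(\omega_i)$ scaling driving the $\varepsilon_1^2(i)/\mathrm{diam}^2(\omega_i)$ term --- and is correct in its structure.

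Two minor remarks. First, your $L^2$ computation yields the prefactor $(\max_i\|\nu_i\|_\infty)^{1/2}$ rather than $\max_i\|\nu_i\|_\infty$; under the stated condition $0\le\nu_i\le1$ the square root is \emph{larger}, so your inequality is slightly weaker than the one displayed, not equivalent to it (your parenthetical about ``equals $1$ in the standard construction'' papers over this but does not close it in general). Second, for $T_2$ you correctly flag that one needs a finite-overlap assumption on the cover to pass from $|\sum_i(u-\zeta_i)\nabla\nu_i|^2$ to $\sum_i|u-\zeta_i|^2|\nabla\nu_i|^2$; the paper does not state this hypothesis explicitly, but it is standard in the gFEM setting and is exactly what the original reference assumes.
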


This theorem shows that the approximation error of the Galerkin
numerical solution for the gFEM depends directly on the accuracy of
the local approximation spaces in each patch.

\subsection{Low-Rank Local Solution Space}\label{sec:low_rank}

One reason for the success of gFEM is that the local numerical
solution space is approximately low-rank, meaning that $n_i$ has a
modest value for all $i$ in~\eqref{eqn:local_space}; see
\cite{BL11}. We review the relevant results in this section, and show
how to find $\Phi_{[i]}$.

Denote by $\omega_i^\ast$ an enlargement of the patch $\omega_i$,
\steveedit{that is, a set for which $\omega_i \subset \omega_i^\ast
  \subset \Omega$.} 
To simplify notation, we suppress subscripts $i$
from here on. We introduce a restriction operator:
\begin{equation*}
P: H_a(\omega^\ast)/\mathbb{R}\rightarrow H_a(\omega)/\mathbb{R}\,,
\end{equation*} 
where $H_a(\omega^\ast)$ is the collection of all $a$-harmonic
functions in $\omega^\ast$ and $H_a(\omega^\ast)/\mathbb{R}$
represents the quotient space of $H_a(\omega^\ast)$ with respect to the
constant function. (This modification is needed to make
$\|\cdot\|_{\mathcal{E}(\omega^\ast)}$ a norm, since an $a$-harmonic
function is defined only up to an additive constant.)
The operator $P$ is determined uniquely by $a(x)$ restricted in
$\omega^\ast$ and $\omega$. 
We denote its adjoint
operator by $P^\ast: H_a(\omega)/\mathbb{R}\rightarrow
H_a(\omega^\ast)/\mathbb{R}$. It is shown in \cite{BL11} that the
operator $P^\ast P$ is a compact, self-adjoint, nonnegative operator
on $H_a(\omega^\ast)/\mathbb{R}$.

\blueedit{
To derive an $n$-dimensional approximation of
$H_a(\omega^\ast)/\mathbb{R}$, we define as follows the \textit{Kolmogorov distance} of an arbitrary
$n$-dimensional function subspace $S_n \subset H_a(\omega)/\mathbb{R}$
to $H_a(\omega^\ast)/\mathbb{R}$ associated with their corresponding norm $\|\cdot\|_{\mathcal{E}(\omega)}$ and $\|\cdot\|_{\mathcal{E}(\omega^\ast)}$ respectively:
\begin{equation}\label{eqn:Kolmogorov_distance}
d(S_n,H_a(\omega^\ast)) = \sup_{\substack{u\in H_a(\omega^\ast)/\mathbb{R}\,, \\ \|u\|_{\mathcal{E}(\omega^\ast)}\leq 1}} \, \inf_{\xi \in S_n} \,\|Pu-\xi\|_{\mathcal{E}(\omega)}\,.
\end{equation}
(We omit the  norms  from the arguments of $d$, since they are clear from the context.)}
By considering all possible $S_n$, we can identify the optimal
approximation space $\Phi_n$ 
that achieves the infimum: 
\blueedit{
\begin{equation} \label{eq:def.Phin}
\Phi_n := \arg \inf_{S_n} d(S_n, H_a(\omega^\ast)) \,.
\end{equation}
We now define a distance measure between $\omega$ and $\omega^\ast$ as
follows: 
\[
d_n(\omega,\omega^\ast)=d(\Phi_n, H_a(\omega^\ast)) \,.
\]
}
The term $d_n(\omega,\omega^\ast)$ is the celebrated Kolmogorov
$n$-width of the compact operator $P$. It reflects how quickly
$a$-harmonic functions supported on $\omega^\ast$ lose their energies
when confined to $\omega$. According to~\cite{Pinkus}, the
\keedit{optimal approximation space} $\Phi_n$ and Kolmogorov
$n$-width can be found explicitly, in terms of the eigendecomposition
of $P^\ast P$ on $\omega^\ast$, which is
\begin{equation}\label{eqn:GEP_P}
P^\ast P \psi_i=\lambda_i \psi_i\,, \quad i=1,2,\dotsc,
\end{equation}
with $\lambda_i$ arranged in descending order \keedit{and
  $\{\psi_i,\ i=1,2,\ldots\}$ the corresponding eigenvectors, which
  are automatically orthonormal according to
  $\langle\cdot,\cdot\rangle_{\CalE(\omega^*)}$.} 
\steveedit{By defining
\begin{equation}\label{eqn:def_Psi}
\Psi_n := \Span\{\psi_1\,,\cdots\,,\psi_n\},
\end{equation}
the 
\keedit{optimal approximation space} is
\begin{equation} \label{eqn:def_Phi}
\Phi_n :=P\Psi_n = \Span\{\phi_1,\phi_2,\dotsc\,,\phi_n\}\,, \quad \mbox{with $\phi_i := P \psi_i$, $i=1,2,\dotsc,n$.}
\end{equation}
It follows from the definitions above that }
\begin{equation}\label{eqn:n_width}
d_n(\omega,\omega^\ast) = \sqrt{\lambda_{n+1}}\,.
\end{equation}
Note that $\psi_i$ are all supported in the enlarged domain
$\omega^\ast$, while $\phi_i$ are their confinements in $\omega$.
Almost-exponential decay of the Kolmogorov width with respect to $n$
was proved in \cite[Theorem~3.3]{BL11}, according to the following result. 
\begin{theorem}\label{thm:accuracy}
The accuracy $d_n(\omega,\omega^\ast)$ has nearly exponential decay
for $n$ sufficiently large: For any small $\varepsilon>0$, we
have
\[
d_n(\omega,\omega^\ast)\leq e^{-n^{\jledit{(d+1)^{-1}-\varepsilon}}}\,.
\]
It follows that for any function $u$ that is $a$-harmonic function in
the patch $\omega^\ast \subset \mathbb{R}^2$, we can find a function
$v\in\Phi_n$ for which
\[
  \|u-v\|_{\CalE(\omega)} \leq d_{n}(\omega,\omega^\ast)\|u\|_{\CalE(\omega^\ast)}
  \jledit{\leq \sim e^{-n^{1/3} - \veps}}\,.
\]
\end{theorem}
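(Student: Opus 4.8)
The plan is to follow the approach of \cite[Theorem~3.3]{BL11}. By \eqref{eqn:n_width} the claimed bound $d_n(\omega,\omega^\ast)\le e^{-n^{(d+1)^{-1}-\varepsilon}}$ is equivalent to $\lambda_{n+1}\le e^{-2n^{(d+1)^{-1}-\varepsilon}}$, so it is enough to bound the eigenvalues of $P^\ast P$, or equivalently to exhibit, for each $n$, an $n$-dimensional subspace of $H_a(\omega)/\mathbb{R}$ that approximates the range of the restriction $P$ to within the stated tolerance, measured as in \eqref{eqn:Kolmogorov_distance}.

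First I would record the analytic input, the interior (Caccioppoli) energy estimate: if $\mathcal{L}w=0$ on a set $B$ and $B'\subset B$ with $\operatorname{dist}(B',\partial B)\ge\eta$, then $\|w\|_{\mathcal{E}(B')}\le\frac{C}{\eta}\inf_{c\in\mathbb{R}}\|w-c\|_{L^2(B)}$, with $C$ depending only on $\alpha_\ast$ and $\beta_\ast$; combined with a Poincar\'e inequality on $B$, this controls $\|w\|_{H^1(B')}$ by $\frac{C}{\eta}\|w\|_{\mathcal{E}(B)}$. This yields a one-layer estimate: restricting from $B$ to a slightly smaller set maps the unit ball of $a$-harmonic functions into a bounded set in $H^1$, and a standard finite-element approximation estimate on a mesh of size $h$ then produces a subspace of dimension $N\simeq h^{-d}$ that approximates this set, in the energy norm on a still-smaller set, with error $O(\eta^{-1}h)$. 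Choosing $h\simeq\eta$ makes the one-layer error at most $\tfrac12$ at the cost of $N\simeq(C/\eta)^d$ degrees of freedom.

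The main step is the telescoping over many layers. Setting $\delta=\operatorname{dist}(\omega,\partial\omega^\ast)$, I would insert nested sets $\omega=\omega_0\subset\omega_1\subset\cdots\subset\omega_m=\omega^\ast$ with consecutive gaps $\eta\simeq\delta/m$ and factor $P$ as the composition of the $m$ one-layer restrictions. Sub-multiplicativity of approximation numbers under composition gives $a_{N_1+\cdots+N_m}(P)\le\prod_{j=1}^m a_{N_j}(\text{one layer})\le 2^{-m}$ with each $N_j\simeq(Cm/\delta)^d$; with $\delta$ fixed, the total dimension is $n\simeq m\cdot(Cm/\delta)^d\simeq c\,m^{d+1}$, and for this $n$ we get $d_n(\omega,\omega^\ast)=\sqrt{\lambda_{n+1}}\le 2^{-m}$. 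Solving $m\simeq(n/c)^{1/(d+1)}$ gives $d_n(\omega,\omega^\ast)\le 2^{-c'n^{1/(d+1)}}=e^{-c''n^{1/(d+1)}}$, and absorbing $c''$ together with any logarithmic slack from the approximation step into an arbitrarily small loss yields $d_n(\omega,\omega^\ast)\le e^{-n^{(d+1)^{-1}-\varepsilon}}$. I expect the main obstacle to be the bookkeeping in the one-layer estimate and this optimization: one must verify that the energy genuinely decays by a multiplicative factor across each layer and that the per-layer cost $N_j$ grows only polynomially in $m$, so that the total dimension scales like $m^{d+1}$ rather than exponentially in $m$.

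The second assertion is then immediate: by \eqref{eq:def.Phin} the space $\Phi_n$ attains the infimum in \eqref{eqn:Kolmogorov_distance}, so for any $a$-harmonic $u$ on $\omega^\ast$ its best approximant $v\in\Phi_n$ satisfies $\|u-v\|_{\mathcal{E}(\omega)}=\|Pu-v\|_{\mathcal{E}(\omega)}\le d_n(\omega,\omega^\ast)\,\|u\|_{\mathcal{E}(\omega^\ast)}$, and plugging in the decay bound just established (with $d=2$, so $(d+1)^{-1}=1/3$) gives $\|u-v\|_{\mathcal{E}(\omega)}\lesssim e^{-n^{1/3}-\varepsilon}$.
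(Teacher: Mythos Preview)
Your proposal is correct in spirit: the Caccioppoli-plus-telescoping argument you outline is exactly the proof given in \cite[Theorem~3.3]{BL11}. Note, however, that the present paper does not supply its own proof of this theorem---it simply quotes the result from \cite{BL11}---so there is nothing further to compare against; your sketch already goes beyond what the paper records.
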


\begin{remark}
  Note that $d_n$ is the $(n+1)$-th singular value
  $\sqrt{\lambda_{n+1}}$ of $P$. Because of the fast decay of $d_n$
  with respect to $n$ indicated by Theorem \ref{thm:accuracy}, $P$ is
  an approximately-low-rank operator. It follows that almost all
  $a$-harmonic functions supported on $\omega^\ast$, when confined in
  $\omega$, look almost alike, and can be represented by a relatively
  small number of ``important'' modes.
\end{remark}

\begin{remark}
	We note that enlarging the domain for over-sampling is \jledit{a standard approach:}  In~\cite{HW97}, the boundary layer behavior confined in $\omega^\ast/\omega$ was studied and utilized for computation.
\end{remark}

\subsection{Computing the Local Solution Space}\label{sec:local_basis}

We describe here the computation of an approximation to $\Phi_n$ via
\steveedit{discretized versions of the objects} defined in the
previous subsection. 
\keedit{More specifically, we discretize the enlarged patch $\omega^\ast$ with a fine mesh, and collect all $a$-harmonic functions upon discretization.} To collect all $a$-harmonic functions, we would need to solve the system with elliptic operator \eqref{eq:elliptical} locally, with
all possible Dirichlet boundary conditions on $\partial\omega^\ast$. For
ease of presentation, here and in sequel, we assume that we choose a
piecewise-affine finite-element discretization of the patch for
computing the local $a$-harmonic functions. Then the discretized
$a$-harmonic functions are determined by their values on grid points
$\{y_1,y_2,\dotsc,y_{N_y}\}$ on the boundary of $\partial\omega^\ast$.
We proceed in three stages.
\paragraph*{\textbf{Stage A}}
\keedit{Construct the discrete $a$-harmonic function space $H_a(\omega^\ast)$ on the fine mesh }
via the functions $\chi_i$ obtained by solving the following system, for
$i=1,2,\dotsc,N_y$:
\begin{equation}\label{eqn:delta}
\begin{cases}
\mathcal{L}\chi_i = -\text{div}\left(a(x)\nabla \chi_i\right)= 0\,,\quad x\in\omega^\ast\\
\chi |_{\partial\omega^\ast} = \delta_i\,,\quad y_i\in\partial\omega^\ast,
\end{cases}\,
\end{equation}
where $\delta_i$ is the hat function that peaks at $y_i$ and equals 
zero at other grid points $y_j$, $j \ne i$. 
Recall that we have assumed a piecewise-affine finite-element
discretization of $\omega^{\ast}$. 

\paragraph*{\textbf{Stage B}}
%

  \keedit{Compute the eigenvalue problem~\eqref{eqn:GEP_P}} in the space spanned by $\{\chi_i\,,i=1,2,\dotsc,N_y\}$. Noting that
  \keedit{
  \begin{equation}\label{eqn:compute_psi}
  \langle P^\ast P\psi_i\,,\delta\rangle_{\CalE(\omega^*)} = \langle P\psi_i\,,P\delta\rangle_{\CalE(\omega)} =\langle \psi_i\,,\delta\rangle_{\CalE(\omega)}\,,\quad\forall\ \delta\in \text{span}\ \{\chi_1,\ldots,\chi_{N_y}\}\,,
  \end{equation}
  }
the weak formulation of \keedit{the eigenvalue problem~\eqref{eqn:GEP_P}, when confined in the discrete $a$-harmonic function space,} is given by
\begin{equation*}
\langle \psi_i\,,\chi \rangle_{\CalE(\omega)}= \lambda_i \langle \psi_i\,,\chi \rangle_{\CalE(\omega^*)}\,,\quad\forall \chi \in\Span\{\chi_1,\ldots,\chi_{N_y}\}\,. 
\end{equation*}
Expanding the eigenfunction $\psi_i$ in terms of $\chi_j$,
$j=1,2,\dotsc,N_y$, as follows:
\begin{equation} \label{eq:psii_ansatz}
\psi_i = \sum_{j}c^{(i)}_{j}\chi_j,
\end{equation}
we obtain the following equation for the coefficient vector
$\vec{c}^{(i)}$:
\begin{equation*}
\sum_j c^{(i)}_{j}\langle \chi_j\,,\chi_k\rangle_{\CalE(\omega)}= \lambda_i \sum_jc^{(i)}_{j}\langle \chi_j\,,\chi_k\rangle_{\CalE(\omega^*)}\,.
\end{equation*}
This system can be written as \keedit{a genearlized eigenvalue problem, as follows:}
\begin{equation}\label{eqn:GEP}
\begin{gathered}
\Smat\vec{c}^{(i)} =
\lambda_i\Smat^\ast\vec{c}^{(i)}\,,\\ \text{with} \; \Smat_{mn} =
\langle \chi_m\,,\chi_n\rangle_{\CalE(\omega)},
\; \text{and} \quad \Smat^\ast_{mn}= \langle
\chi_m\,,\chi_n\rangle_{\CalE(\omega^*)}\,, \quad m,n = 1,2,\dotsc,N_y.
\end{gathered}
\end{equation}

This generalized eigenvalue problem can be solved for $\lambda_i$,
$i=1,2,\dotsc,N_y$, arranged in descending order, and their associated
eigenfunctions $\psi_i$, $i=1,2,\dotsc,N_y$ defined from
\eqref{eq:psii_ansatz} using the generalized eigenvectors
$\vec{c}^{(i)}$ from \eqref{eqn:GEP}. 
Choose index $n$ to satisfy $\lambda_{n+1}<\TOL<\lambda_n$, where
$\TOL$ is a given error tolerance.
\paragraph*{\textbf{Stage C}}
Obtain $\Phi_n$ by substituting the functions $\psi_i$,
$i=1,2,\dotsc,n$ calculated in Stage B into \eqref{eqn:def_Phi}.

\section{Randomized Sampling Methods for Local Bases}\label{sec:methods}

In this section we propose a class of random sampling methods to
construct local basis functions efficiently. As seen in
Section~\ref{sec:local_basis}, finding the optimal basis functions
amounts to solving the generalized eigenvalue problem
in~\eqref{eqn:GEP}. The main cost comes not from performing the
eigenvalue decomposition, but rather from computing the $a$-harmonic
functions $\chi_i$, which are used to construct the matrices $\Smat$
and $\Smat^\ast$ in \eqref{eqn:GEP}. As shown in
Section~\ref{sec:low_rank}, the eigenvalues decay almost
exponentially, indicating that \keedit{only a limited number of
  local modes is needed to represent the whole solution space
  well}. This low-rank structure motivates us to consider randomized
sampling techniques.

Randomized algorithms have been highly successful in compressed
sensing, where they are used to extract low-rank structure efficiently
from data. The Johnson-Lindenstrauss lemma
\cite{johnson1984extensions} suggests that structure in high
dimensional data points is largely preserved when projected onto
random lower-dimensional spaces. The randomized SVD (rSVD) algorithm
uses this idea to captures the principal components of a large matrix
by random projection of its row and column spaces into smaller
subspaces; see \cite{Tropp_rSVD} for a review. In the current
numerical PDE context, knowing that the local solution space is
essentially low-rank, we seek to adopt the random sampling idea to
generate local approximate solution spaces efficiently. 

Randomized SVD cannot be applied directly in our context, as we
discuss in Section~\ref{sec:svd}. We propose instead a method based on
Galerkin approximation of the generalized eigenvalue problem on a
small subspace. One immediate difficulty is that an arbitrarily given
random function is not necessarily $a$-harmonic. Thus, our method
first generates a random collection of functions and projects them
onto the $a$-harmonic function space, and then solves the generalized
eigenvalue problem~\eqref{eqn:GEP} on the subspace to find the optimal
basis functions. A detailed description of our approach is shown in
Algorithm~\ref{alg:1}.
\begin{algorithm}
\begin{algorithmic}
\STATE {\bf Stage 1:} Randomly generate a collection of $N_r$
$a$-harmonic functions.
\begin{ALC@g}
\STATE {\bf Stage 1-A:} Randomly pick functions $\{\xi_k \, : \, k=1,2,\dotsc,N_r\}$ supported on $\omega^\ast$;
\STATE {\bf Stage 1-B:} For each $k=1,2,\dotsc,N_r$, project $\xi_{k}$ onto the $a$-harmonic
  function space to obtain $\gamma_k$;
\end{ALC@g}
\STATE {\bf Stage 2:} Solve the generalized eigenvalue problem to determine leading modes.
\begin{ALC@g}
\STATE {\bf Stage 2-A:} Define:
\begin{equation} \label{eqn:SSstar}
  \Smat_{\gamma,mn} := \langle \gamma_m\,,\gamma_n\rangle_{\CalE(\omega)}\,,\quad\Smat^\ast_{\gamma,mn} := \langle \gamma_m\,,\gamma_n\rangle_{\CalE(\omega^*)}\,, \quad
  m,n=1,2,\dotsc,N_r,
\end{equation}
and solve the associated generalized eigenvalue problem:
\begin{equation}\label{eqn:ran_GEP}
\Smat_{\gamma}\vec{v}^\gamma = \lambda^\gamma\Smat_{\gamma}^{\ast}\vec{v}^\gamma\,,
\end{equation}
with $(\lambda^{\gamma}_{j}\,,\vec{v}^{\gamma}_{j})$ denoting the
$j$-th eigen-pairs, such that \steveedit{$\lambda^{\gamma}_1 \ge
  \lambda^{\gamma}_2 \ge \cdots \ge \lambda^{\gamma}_{N_r} \ge 0$;}
\STATE {\bf Stage 2-B:} \steveedit{Choose $n$ such that
  $\lambda^\gamma_1\geq\cdots\geq\lambda^\gamma_n\geq\text{TOL}>\lambda^\gamma_{n+1}$
  (where $\text{TOL}$ is a preset tolerance) and collect the first $n$
  eigenfunctions to use as the local basis functions:}
\begin{equation}
\label{eqn:RanOptimalSpace}
\Phi_n^{r}=\Span\{\phi^r_1,\phi^r_2,\dotsc,\phi^r_n\} = \Span\{P\psi_j^r\,,j=1,2,\dotsc,n\} \,,
\end{equation}
where $\psi_j^r = \sum_{k}\vec{v}^\gamma_{j,k}\gamma_{k}$.
\end{ALC@g}
\end{algorithmic}
\caption{Determining Optimal Local Bases\label{alg:1}}
\end{algorithm}

Note that the steps in Stage 2 of Algorithm~\ref{alg:1} are parallel to
those of Section~\ref{sec:local_basis}, but only a small number of
functions $\gamma_k$ is used in the generalized eigenvalue problem,
rather that the whole list of $a$-harmonic functions (i.e.,
$N_r \ll N_y$). We therefore save significant computation in preparing the
$a$-harmonic function space, in assembling the $\Smat$ and
$\Smat^{\ast}$ matrices, and in solving the generalized eigenvalue
decomposition.

The key is to use the random sampling strategy in Stage 1 of
Algorithm~\ref{alg:1} to generate an effective small subspace for the
generalized eigenvalue problem. This aspect of the algorithm will be the focus of the rest
of this section.

\subsection{$a$-Harmonic Projection}
\label{sec:harmonic}

Let us first discuss the $a$-harmonic projection of a given function $\xi$ supported on $\omega^\ast$. This problem can be formulated as a
PDE-constrained optimization problem:
\begin{equation}\label{eqn:harmonic_projection}
\min_{\gamma} \frac{1}{2}\|\gamma-\xi\|^2_{L^2(\omega^\ast)}\quad \text{subject to}\quad \mathcal{L}\gamma=0\,,
\end{equation}
where $\mathcal{L} = -\divop a \nabla $ is the elliptic operator defined in
\eqref{eqn:elliptic}. \blueedit{The Lagrangian function for 
\eqref{eqn:harmonic_projection} is as follows:
\begin{equation}
\label{eqn:lagrangian}
 F(\gamma,\mu) := \frac{1}{2}\|\gamma-\xi\|^2_{L^2(\omega^\ast)} - \langle\mu\,,\mathcal{L}\gamma\rangle_{L^2(\omega^*)}\,,
\end{equation}
where $\mu$ is a Lagrange multiplier.} In the discrete setting, we
form a grid $\{x_i\}$ over $\omega^\ast$ and denote by $\zeta_i$ the hat
function centered at grid point $x_i$. (Recall that we have assumed
piecewise-affine finite element discretization.) The Lagrangian function  for the corresponding discretized optimization problem is 
\begin{equation}\label{eqn:harmonic_projection_dis}
F(\gamma,\mu) = \frac{1}{2}(\gamma^i-\xi^i)^\top(\gamma^i-\xi^i)-\mu^\top \Amat^{ii}\gamma^i-\mu^\top \Amat^{ib}\gamma^b\,,
\end{equation}
where the superindices $i$ and $b$ stand for interior and boundary grids, respectively, and $\Amat$ is the stiffness matrix whose $(m,n)$ element is
\begin{equation*}
\Amat_{mn} = \langle a\nabla \zeta_m\,, \nabla \zeta_n\rangle_{L^2(\omega^*)}\,.
\end{equation*}
In the discrete setting, $\mu$ is a vector of the same length as $\gamma^i$ (the number of grid points in the interior). Note that in the translation to the discrete setting, we represent $\mathcal{L}\gamma=0$  by
$\Amat\gamma = 0$,
which leads to
\begin{equation*}
\Amat^{ii}\gamma^i + \Amat^{ib}\gamma^b = 0\,.
\end{equation*}
Here $\Amat^{ii}$ is the stiffness matrix confined in the interior,
and $\Amat^{ib}$ is the part of the stiffness matrix generated by
taking the inner product of the interior basis functions and the
boundary basis functions. To solve the minimization problem, we take
the partial derivatives of~\eqref{eqn:harmonic_projection_dis} with
respect to $\gamma$ and $\mu$ and set them equal to zero, as follows:
\begin{alignat*}{2}
\nabla_{\gamma^i} F&=\gamma^i-\xi^i-\Amat^{ii\top}\mu && =0\,,\\
\nabla_{\gamma^b} F&=\Amat^{ib\top}\mu && = 0\,,\\
\nabla_\mu F& = \Amat^{ii}\gamma^i+\Amat^{ib}\gamma^b &&= 0\,.
\end{alignat*}
Some manipulation yields the following systems for $\gamma^b$ and $\gamma^i$:
\begin{equation*}
\Amat^{ib\top}\left(\Amat^{ii}\right)^{-2}\Amat^{ib}\gamma^b = -\Amat^{ib\top} \left(\Amat^{ii}\right)^{-1}\xi^i,\quad \gamma^i=-\left(\Amat^{ii}\right)^{-1}\Amat^{ib}\gamma^b\,.
\end{equation*}
The solution to this system gives the solution
of~\eqref{eqn:harmonic_projection} in the discrete setting.
\blueedit{ Recall that $\gamma^b$ is a vector containing only the
    boundary conditions for the solution, and thus the computation is
    rather cheap, given that the matrix
    $\Amat^{ib\top}\left(\Amat^{ii}\right)^{-2}\Amat^{ib}$ can be
    prepared ahead of time. Computing $\gamma^i$ using $\gamma^b$
    amounts to numerically solving a finite element problem confined
    in a small domain $\omega^\ast$, and thus the numerical cost is
    the same as preparing an $a$-harmonic function.  }

\subsection{Random sampling strategies}\label{sec:Random_strat}

We have many possible choices for the random functions functions $\xi_k$,
$k=1,2,\dotsc,N_r$ in Stage 1-A of \ref{alg:1}. Here we list several natural choices.
\begin{itemize}
\item[1.]\emph{Interior $\delta$-function.} Choose a random grid point in
  $\omega$ and set $\xi(x) = 1$ at this grid point, and zero at all
  other grid points. That is, $\xi$ is the hat function associated with
  the grid point $x$.
\item[2.]\emph{Interior i.i.d.~function.} Choose the value of $\xi$ at each
  grid point in $\omega$ independently from a standard normal Gaussian distribution. The values of $\xi$ at grid points in $\omega^\ast\backslash\omega$ are set to $0$.
\item[3.]\emph{Full-domain i.i.d.~function.} The same as in 2, except that
  the values of $\xi$ at the grid points in
  $\omega^{\ast}\backslash\omega$ are also chosen as Gaussian random
  variables.
\item[4.]\emph{Random Gaussian.} Choose a random grid point
  $x_0\in\omega$ and set $\xi(x)=e^{-\frac{(x-x_0)^2}{2}}$ at all
  grid points $x \in \omega^{\ast}$. 
\end{itemize}
We aim to select basis functions (through Stage 2) that are
associated with the largest eigenvalues, 
so that the Kolmogorov $n$-width can be small~\eqref{eqn:n_width}. 
Thus, we hope that in  Stage 1, the chosen functions $\xi_k$ provide large eigenvalues $\lambda_i$ in \eqref{eqn:GEP}. A large value of
$\lambda$ indicates that a large portion of the energy is maintained
in $\omega$, with only a small amount coming from the buffer region
$\omega^\ast\backslash\omega$. It therefore suggests to choose
functions $\xi_k$ with most of their variations inside
$\omega$. However, the projection to $a$-harmonic space step makes the
locality of the resulting functions hard to predict.  In
Section~\ref{sec:criteria}, we propose and analyze a criterion for
the performance of the random sampling schemes. In particular, we
compare the four  choices listed above.

\smallskip 
\blueedit{ We mention here that a list of $a$-harmonic functions
    could be obtained through a different route: one can prepare
    boundary conditions and compute local $a$-harmonic function inside
    $\omega^\ast$ with the pre-assigned boundary. There are various
    ways to prepare boundary conditions, including the following. }


\begin{enumerate} 
	
\item[5.] \emph{Random \keedit{i.i.d.} boundary sampling.}
In~\cite{calo2016randomized}, the authors proposed to obtain a list of random
$a$-harmonic functions by computing the local elliptic equation with
i.i.d.~random Dirichlet boundary conditions. Assuming there are $N_y$
grid points on the boundary $\partial\omega^\ast$, we define $g$ to be
a vector of length $N_y$ with i.i.d.~random variables for each
component. We then define $\gamma$ by solving
\begin{equation}\label{eqn:ran_BVP}
\begin{cases}
\mathcal{L}\gamma= 0\,,\quad x\in\omega^\ast,\\
\gamma |_{\partial\omega^\ast} = g.
\end{cases}
\end{equation}
This process is repeated  $N_r$ times to obtain a set of $N_r$ random
$a$-harmonic functions $\{\gamma_k \, : \, k=1,2,\dotsc,N_r\}$.

\item[6.] \emph{Randomized boundary sampling with exponential covariance.}
A technique in which the Dirichlet boundary conditions are chosen to
be random Gaussian variables with a specified covariance matrix is
described in \cite{Lipton16}. This matrix is assumed to be an
exponential function,
that is,
\begin{equation}
\Cov(y_i,y_j) = \exp(-|y_i-y_j|/\sigma)\,.
\end{equation}
The first few modes of a Karhunen-Lo\'{e}ve expansion are used to
construct a boundary condition in~\eqref{eqn:ran_BVP}, with which
basis functions are computed. Although a justification for this
approach is not provided, numerical computations show that it is more 
efficient than the i.i.d.~random boundary sampling. 

\blueedit{
\item[7.] \emph{Smooth boundary sampling.} 
  	I.i.d. random Dirichlet
    boundary conditions typically yield solutions that oscillate a lot
    near the boundary, and thus have sharp boundary layers. To
    eliminate this effect, one can use a Gaussian kernel to smooth out
    the boundary profile. In particular, the i.i.d. random sample can
    be convolved with a Gaussian function
    $\frac{1}{\sqrt{2\pi}\sigma}e^{-x^2/2\sigma^2}$ to obtain a
    smoother boundary condition.
%
}
\end{enumerate}
\blueedit{ We note that Strategies 5 and 6 above were proposed
  in~\cite{calo2016randomized} and~\cite{Lipton16}
  respectively. However, in~\cite{calo2016randomized}, the
  post-processing for basis selection was conducted using the
  generalized eigenvalue problem of the stiffness and mass matrix
  instead of Equation \eqref{eqn:GEP_P}, and thus there is no guarantee in the exponential decay.  }

\subsection{Connection with Randomized SVD}\label{sec:svd}

We briefly address the connection between the random sampling method
we propose in this paper and the well-known randomized SVD (rSVD)
algorithm. Although rSVD cannot be used directly in our problem, it
serves as a motivation for our randomized sampling strategies.


The randomized SVD algorithm, studied thoroughly in~\cite{Tropp_rSVD},
speeds up the computation of the SVD of a matrix when the matrix is large and
approximately  low rank. With high probability, the singular vector
structure is largely preserved when the matrix is projected onto a
random subspace. Specifically, for a random matrix $\Rmat$ with a
small number of columns (the number depending on the rank of $\Amat$), it is
proved in \cite{Tropp_rSVD} that if we obtain $\Qmat$ from the QR
factorization of $\Amat\Rmat$, we have
\begin{equation} \label{eq:rsvd.1}
\|\Amat - \Qmat\Qmat^\top\Amat\|_2\ll \| \Amat\|_2\,.
\end{equation}
This bound implies that any vector in the range space of $\Amat$ can
be well approximated by its projection into the space spanned by
$\Qmat$. For example, if $\vec{u} = \Amat\vec{v}$, we have from
\eqref{eq:rsvd.1} that
\begin{equation} \label{eq:rsvd.2}
\|\vec{u} - \Qmat\Qmat^\top\vec{u}\|\ll\|\vec{u}\|\,.
\end{equation}
We note that $\Qmat$ and
$\Amat\Rmat$ span the same column space, but $\Qmat$ is easier to
work with and better conditioned, because its columns are
orthonormal. Equivalent to \eqref{eq:rsvd.2}, we can also say that any
$\vec{u}$ in the image of $\Amat$ can be approximated well using a
linear combination of the columns of $\Amat\Rmat$.

To see the connection between rSVD and our problem, we first write the
generalized eigenvalue problem \eqref{eqn:GEP} in a SVD
form. Recall the definitions \eqref{eqn:SSstar} of $\Smat$ and
$\Smat^\ast$:
\begin{equation*}
\Smat_{mn} = \int_{\omega}a(x)\nabla \chi_m(x)\cdot\nabla \chi_n(x)\rd{x}\,,\quad \Smat^\ast_{mn} = \int_{\omega^\ast}a(x)\nabla \chi_m(x)\cdot\nabla \chi_n(x)\rd{x}\,,
\end{equation*}
and define
\begin{equation}\label{eqn:def_phi}
\Phi^\ast = \left[\sqrt{a}\nabla \chi_1,\sqrt{a}\nabla \chi_2,\dotsc,\sqrt{a}\nabla \chi_{N_y}\right]\,,\quad \Phi = \Phi^\ast|_\omega\,.
\end{equation}
Since $\Smat = \Phi^\top\Phi$ and $\Smat^\ast =
\Phi^{\ast\top}\Phi^\ast$, the generalized eigenvalue
problem~\eqref{eqn:GEP} can be written as follows:
\begin{equation} \label{eq:GEP.2}
\Phi^\top\Phi\vec{c} = \lambda\Phi^{\ast\top}\Phi^\ast\vec{c}\,.
\end{equation}
We write the  QR factorization for $\Phi^\ast$ as follows: 
\begin{equation*}
\Phi^\ast = \Qmat_{\Phi^*}  \Rmat_{\Phi^*} \,,
\end{equation*}
and denote $\vec{d} = \Rmat_{\Phi^*} \vec{c}$. By substituting into
\eqref{eq:GEP.2}, we obtain \keedit{
\begin{equation*}
\left(\Phi \Rmat_{\Phi^*}^{-1}\right)^\top\left(\Phi\Rmat_{\Phi^*}^{-1}\right)\vec{d} = \lambda \vec{d}\,,
\end{equation*}
meaning that $\left(\sqrt\lambda,\vec{d}  \right)$ forms a singular value pair of the matrix $\Phi R^{-1}_{\Phi^\ast}$. }

According to the rSVD argument, the leading singular vectors of
$\Phi\Rmat^{-1}_{\Phi^*}$ are captured by those of
\begin{equation}\label{eqn:phi_new}
\Phi\Rmat_{\Phi^*}^{-1} \Rmat\,,
\end{equation}
where $\Rmat$ is a matrix whose entries are i.i.d Gaussian random
variables.  Specifically, with high probability, the leading singular
values of $\Phi\Rmat_{\Phi^*}^{-1}\Rmat$ are almost the same
as those of $\Phi\Rmat_{\Phi^*}^{-1}$, and the column space
spanned by~\eqref{eqn:phi_new} largely covers the image of
$\Phi\Rmat_{\Phi^*}^{-1}$, as in~\eqref{eqn:def_phi}. 

We now interpret $\Phi\Rmat^{-1}_{\Phi^*}\Rmat$ from the
viewpoint of PDEs. Decomposing $\Rmat_{\Phi^*}^{-1}\Rmat$ into
columns as follows:
\begin{equation}\label{eqn:ideal_r}
\Rmat_{\Phi^*}^{-1} \Rmat = [r_1,r_2,\dotsc]\,,\quad\text{with}\quad 
r_k = [r_{k1},r_{k2},\dotsc]^\top\,,
\end{equation}
and denoting $\gamma_k = \sum_{j}r_{kj}\chi_j$,
we have from \eqref{eqn:def_phi} that 
\begin{equation*}
\Phi r_k = \sqrt{a}\nabla \biggl(\sum_{j}r_{kj}\chi_j\biggr)\doteq \sqrt{a}\nabla \gamma_k\,.
\end{equation*}
Numerically, this corresponds to solving the following system for
$\gamma_k$:
\begin{equation}\label{eqn:elliptic_gamma}
\begin{cases}
\mathcal{L}\gamma_k = -\text{div}\left(a(x)\nabla \gamma_k \right) = 0\,,\quad x\in\omega^\ast,\\
\gamma_k|_{\partial\omega^\ast} = \sum_jr_{kj}\delta_{y_j}.
\end{cases}
\end{equation}
It is apparent from this equation that to obtain
$\Phi\Rmat^{-1}_{\Phi^*}\Rmat$, we do not need to compute
all functions $\chi_j$, $j=1,2,\dotsc,N_y$ and use them to construct
$\gamma_k$. Rather, we can compute $\gamma_k$ directly by solving the
elliptic equation with random boundary conditions given by $r_{kj}$,
$j=1,2,\dotsc,N_y$. The cost of this procedure is proportional to
$N_r$, which is much less than $N_y$.

Unfortunately, this procedure is difficult to implement in a manner
that accords with the rSVD theory.  $\Rmat$ is constructed using
i.i.d.~Gaussian random variables, but $\Rmat^{-1}_{\Phi^*}$ is unknown
ahead of time, so the distribution of $r_k$ defined in
\eqref{eqn:ideal_r} is unknown. The theory here suggests that there
\keedit{exists some} random sampling strategy that achieves the
accuracy and efficiency that characterize rSVD, but it does not
provide such a strategy.

\section{Efficiency of Various Random Sampling Methods}\label{sec:criteria}

As discussed in Section~\ref{sec:methods}, given multiple ways to
choose the random samples in Stage 1 of Algorithm~\ref{alg:1}, it is
natural to ask which one is better, and how to predetermine the
approximation accuracy. We answer these questions in this section.

The key requirement is that Algorithm~\ref{alg:1} should capture the
high-energy modes of \eqref{eqn:GEP_P}, the modes that correspond to
the highest values of $\lambda_i$.
We start with a simple example in
Section~\ref{sec:1D} that finds the relationship between the energy
captured by a certain single mode, and the angle that that mode makes
with the highest energy mode.
The argument used can be easily applied to the case with multiple modes and the link towards the Kolmogorov distance will be shown in Section~\ref{sec:highD}. We will discuss the situation in the general setting with plain linear algebra, and its relevance to local PDE basis construction is outlined in Section~\ref{sec:basis_energy}.

\subsection{A One-Mode Example}\label{sec:1D}

Suppose we are working in a three-dimensional space, with symmetric positive definite matrices $\Amat$ and $\Bmat$ and generalized eigenvectors $x_1$,
$x_2$, and $x_3$ such that
\begin{equation} \label{eq:angle.delta}
\langle x_i\,,x_j\rangle_\Bmat  = x_i^\top\Bmat x_i = \delta_{ij}\,,\quad \langle x_i\,,x_j\rangle_\Amat  = x_i^\top\Amat x_i = \delta_{ij}\lambda_i\,,
\end{equation}
for generalized eigenvalues $\lambda_1>\lambda_2>\lambda_3$. We thus have
\begin{equation*}
\Amat x_i = \lambda_i\Bmat x_i, \quad i=1,2,3.
\end{equation*}
\blueedit{ Suppose we have some one-dimensional space $\Xcal$ spanned by
  a vector $x$, and we intend to use it to as an approximation of the
  space $\Xcal_1$ spanned by the leading eigenvector $x_1$. The energy
  of $\Xcal$ is 
\begin{equation}\label{eqn:energy_1}
E(\Xcal) = \frac{x^\top\Amat x}{x^\top\Bmat x}\,,
\end{equation}
and the angle between the spaces $\Xcal$ and $\Xcal_1$ is defined by}
\blueedit{
\begin{equation}\label{eqn:angle_1}
d(\Xcal,\Xcal_1) = \max_{|\beta|\leq 1}\min_{\alpha}\|\alpha x - \beta x_1\|_\Amat\,.
\end{equation}
}


We have the following result (which generalizes easily to dimension greater than $3$).
\begin{proposition}\label{thm:1d_example}
The angle \eqref{eqn:angle_1} is bounded in terms of
the energy \eqref{eqn:energy_1} as follows:
\blueedit{
	\begin{equation}\label{eqn:angle_bound_1}
	d(\Xcal,\Xcal_1)\leq\sqrt{\frac{\lambda_1\lambda_2\left(\lambda_1-E(\Xcal)\right)}{(\lambda_1-\lambda_2)E(\Xcal)}}\,.
	\end{equation}   }
\end{proposition}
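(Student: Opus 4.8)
The plan is to expand an arbitrary unit vector $x$ spanning $\Xcal$ in the generalized eigenbasis $\{x_1,x_2,x_3\}$ and track how the energy constraint forces $x$ to be close to the $x_1$-direction. First I would write $x = c_1 x_1 + c_2 x_2 + c_3 x_3$; since scaling $x$ changes neither $E(\Xcal)$ nor $d(\Xcal,\Xcal_1)$, normalize so that $x^\top \Bmat x = c_1^2 + c_2^2 + c_3^2 = 1$ using \eqref{eq:angle.delta}. Then $E(\Xcal) = x^\top \Amat x = \lambda_1 c_1^2 + \lambda_2 c_2^2 + \lambda_3 c_3^2$. The key elementary estimate is to bound how much "weight" $x$ can put on the non-leading directions: since $\lambda_1 > \lambda_2 > \lambda_3$, we get $E(\Xcal) \le \lambda_1 c_1^2 + \lambda_2(c_2^2 + c_3^2) = \lambda_1 c_1^2 + \lambda_2(1 - c_1^2)$, hence $c_1^2 \ge \dfrac{E(\Xcal) - \lambda_2}{\lambda_1 - \lambda_2}$ and correspondingly $c_2^2 + c_3^2 = 1 - c_1^2 \le \dfrac{\lambda_1 - E(\Xcal)}{\lambda_1 - \lambda_2}$.

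Next I would compute the angle $d(\Xcal,\Xcal_1)$ explicitly. For fixed $\beta$ with $|\beta|\le 1$, the inner minimization $\min_\alpha \|\alpha x - \beta x_1\|_\Amat$ is a one-dimensional least-squares problem; its optimal value is the $\Amat$-norm of the component of $\beta x_1$ orthogonal (in $\langle\cdot,\cdot\rangle_\Amat$) to $x$. Using the expansion of $x$ and the $\Amat$-orthogonality relations $\langle x_i, x_j\rangle_\Amat = \delta_{ij}\lambda_i$, a direct computation gives the squared residual as $\beta^2\big(\lambda_1 - \dfrac{(\lambda_1 c_1)^2}{\lambda_1 c_1^2 + \lambda_2 c_2^2 + \lambda_3 c_3^2}\big) = \beta^2\big(\lambda_1 - \dfrac{\lambda_1^2 c_1^2}{E(\Xcal)}\big)$. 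Maximizing over $|\beta|\le 1$ sets $\beta^2 = 1$, so
\[
d(\Xcal,\Xcal_1)^2 = \lambda_1 - \frac{\lambda_1^2 c_1^2}{E(\Xcal)} = \frac{\lambda_1\big(E(\Xcal) - \lambda_1 c_1^2\big)}{E(\Xcal)}.
\]
Finally, substitute the lower bound $c_1^2 \ge \dfrac{E(\Xcal)-\lambda_2}{\lambda_1-\lambda_2}$: then $E(\Xcal) - \lambda_1 c_1^2 \le E(\Xcal) - \lambda_1\dfrac{E(\Xcal)-\lambda_2}{\lambda_1-\lambda_2} = \dfrac{\lambda_2(\lambda_1 - E(\Xcal))}{\lambda_1 - \lambda_2}$, which yields $d(\Xcal,\Xcal_1)^2 \le \dfrac{\lambda_1\lambda_2(\lambda_1 - E(\Xcal))}{(\lambda_1-\lambda_2)E(\Xcal)}$, the claimed bound \eqref{eqn:angle_bound_1}.

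The only subtle point — really the main thing to get right rather than a true obstacle — is the evaluation of the inner least-squares residual: one must be careful that "orthogonal projection" here is with respect to the $\Amat$-inner product (not Euclidean), so the optimal $\alpha$ is $\alpha^\ast = \beta \langle x_1, x\rangle_\Amat / \langle x, x\rangle_\Amat = \beta \lambda_1 c_1 / E(\Xcal)$, and the residual norm squared is $\beta^2(\langle x_1,x_1\rangle_\Amat - \langle x_1,x\rangle_\Amat^2/\langle x,x\rangle_\Amat)$. After that, everything is the one-variable inequality above, and the generalization to dimension $> 3$ is immediate because the argument only ever uses $\lambda_i \le \lambda_2$ for $i \ge 2$ and never the specific dimension.
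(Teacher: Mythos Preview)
Your proof is correct and follows essentially the same route as the paper's: expand $x$ in the generalized eigenbasis with $\Bmat$-normalization, compute the inner $\Amat$-least-squares residual to obtain $d(\Xcal,\Xcal_1)^2 = \lambda_1\bigl(E(\Xcal)-\lambda_1 c_1^2\bigr)/E(\Xcal)$, and then bound $c_1^2$ (equivalently $1-c_1^2$) via $E(\Xcal)\le \lambda_1 c_1^2+\lambda_2(1-c_1^2)$. The only cosmetic difference is that the paper first bounds $E(\Xcal)-\lambda_1 c_1^2\le(1-c_1^2)\lambda_2$ and then $1-c_1^2\le(\lambda_1-E(\Xcal))/(\lambda_1-\lambda_2)$ in two separate steps, whereas you substitute the lower bound on $c_1^2$ directly; the algebra is identical.
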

\begin{proof}
The proof is simple algebra. As $\{x_1,x_2,x_3\}$ span the entire
space and are $\Bmat$-orthogonal, we have
\begin{equation} \label{eq:xw}
x = w_1x_1+w_2x_2+w_3x_3\,,
\end{equation}
with $w_i = x^\top\Bmat x_i$, $i=1,2,3$. According to the definition
of the angle, one can reduce the problem by setting $\beta=1$ and
$\sum_iw_i^2=1$, so that $\|x\|_{\Bmat}=1$ in
\eqref{eqn:angle_1}. (With these normalizations, we have from
\eqref{eq:angle.delta} and \eqref{eqn:energy_1} that $E(\Xcal) = x^TAx =
\lambda_1 w_1^2 + \lambda_2 w_2^2 + \lambda_3 w_3^2$.)
We thus have
\begin{align*}
d(\Xcal,\Xcal_1)^2 &= \min_{\alpha}\|\alpha x - x_1\|^2_\Amat\\
& = \min_{\alpha}\|(\alpha w_1-1) x_1 + \alpha w_2x_2 + \alpha w_3x_3\|^2_\Amat\\
& = \min_\alpha \left((\alpha w_1-1)^2\lambda_1+ \alpha^2 w^2_2 \lambda_2 + \alpha^2w^2_3\lambda_3\right).
\end{align*}
The minimum is achieved at $\alpha = w_1\lambda_1/E(\Xcal)$,
with the minimized angle being
\begin{equation} \label{eq:ss1}
\left(\angle(x,x_1)\right)^2 = \frac{E(\Xcal) - w_1^2\lambda_1}{E(\Xcal)}\lambda_1\,.
\end{equation}
To bound the numerator in \eqref{eq:ss1} we observe that
\begin{equation*}
E(\Xcal) - w_1^2\lambda_1 =  w_2^2\lambda_2 + w_3^2\lambda_3 \leq w_2^2 \lambda_2 + w_3^2 \lambda_2 = (1-w_1^2)\lambda_2,
\end{equation*}
and moreover
\[
E(\Xcal) \leq w_1^2\lambda_1 + (1-w_1^2)\lambda_2
\;\Rightarrow\;
\lambda_1 - E(\Xcal) \ge (1-w_1^2)(\lambda_1-\lambda_2)
\;\Rightarrow\;
1-w_1^2\leq\frac{\lambda_1-E(\Xcal)}{\lambda_1-\lambda_2}\,.
\]
By combining these last two bounds, we obtain
\[
E(\Xcal) - w_1^2 \lambda_1 \le \lambda_2 \frac{\lambda_1-E(\Xcal)}{\lambda_1-\lambda_2}.
\]
By substituting this bound into \eqref{eq:ss1}, we obtain
\eqref{eqn:angle_bound_1}.
\end{proof}

Note that the bound \eqref{eqn:angle_bound_1} decreases to zero as
$\lambda_1 - E(\Xcal) \downarrow 0$.

  According to~\eqref{eqn:angle_bound_1}, a larger gap in the
spectrum between $\lambda_1$ and $\lambda_2$ yields a tighter bound,
thus better control over the angle.  The theorem indicates 
that the ``energy" is the quantity that measures how well the randomly
given vector $x$ captures the first mode, and thus serves as the
criterion for the quality of the approximation.

\subsection{Higher-Dimensional Criteria}\label{sec:highD}

In this section, we seek the counterpart in higher dimensional space of the previous result. Suppose now that the two symmetric positive definite matrices
$\Amat$ and $\Bmat$ are $n\times n$, and their generalized eigenpairs
$(\lambda_i,x_i)$ satisfy the following conditions:
\begin{equation} \label{eq:ss4}
\langle x_i\,,x_j\rangle_\Bmat  = \delta_{ij}\,,\quad \langle x_i\,,x_j\rangle_\Amat  = \delta_{ij}\lambda_i\,,
\end{equation}
so that
\begin{equation*}
\Amat x_i = \lambda_i\Bmat x_i\,,\quad\text{with}\quad \lambda_1 \ge \cdots \ge \lambda_k>\lambda_{k+1} \ge \cdots \ge \lambda_n>0\,,
\end{equation*}
that is,
\begin{equation} \label{eq:sw65}
\Amat \Xmat = \Bmat \Xmat \Lambda\,,\quad\text{with}\quad\Lambda = \text{diag} (\lambda_1,\lambda_2,\dotsc,\lambda_n)\,.
\end{equation}



\blueedit{ Suppose we are trying to recover the optimal
  $k$-dimensional space $\Xcal^h:=$span$\{\Xmat^h\}$, where
  $\Xmat^h=[x_1,x_2,\ldots,x_k]$ collects the first $k$
  eigenfunctions. Define $\Xcal^l:=$span$\{\Xmat^l\} $, where $\Xmat^l
  = [x_{k+1},\ldots,x_n]$ collects the remaining modes. Denoting by
  $\Ycal$ our proposed approximation space to $\Xcal^h$, we seek a
  quantity that measures how well the proposal space $\Ycal$
  approximates the optimal space $\Xcal^h$. In particular, we will
  show below that the ``angle" between the proposal $\Ycal$ and the
  to-be-recovered space $\Xcal^h$ relies on the ``energy" of $\Ycal$.
}



\blueedit{
\begin{definition}[Energy of a space $\Zcal$]\label{def:energy}
	For any given $k$-dimensional space $\Zcal$, define
	$\Zmat\in\mathbb{R}^{n\times k}$ to be a matrix whose columns form a $\Bmat$-orthonormal basis of $\Zcal$
	(obtained through performing Gram-Schmidt with
	$\Bmat$-inner product). Then the energy of $\Zcal$ is defined as:
	\begin{equation}\label{eqn:energy}
	E(\Zcal):=\frac{\Tr(\Zmat^\top \Amat\Zmat)}{\Tr (\Zmat^\top \Bmat\Zmat)}\,.
	\end{equation}
\end{definition}
We note that this is a natural extension of energy defined
in~\eqref{eqn:energy_1}, and it is well-defined, in the sense that the
energy term~\eqref{eqn:energy} depends solely on the space $\Zcal$
rather than the basis $\Zmat$, as shown in Appendix
\ref{sec:appendixB}. }

%

\blueedit{We now generalize the angle \eqref{eqn:angle_1} and define the
  Kolmogorov distance from space $\Ycal$ to the optimal space
  $\Xcal^h$, with norms $\|\cdot \|_\Amat$ and $\|\cdot \|_\Bmat$,
  respectively.}
\blueedit{
\begin{definition}[Angle between spaces]
	Define the Kolmogorov distance from $\Ycal$ and the optimal subspace
	$\Xcal^h$ as follows:
	\begin{equation}\label{eqn:distance}
	d(\Ycal,\Xcal^h)=\sup_{\substack{z\in \Xcal^h ,\\ \|z\|_\Bmat\leq 1}} \, \inf_{y\in \Ycal} \, \|z-y\|_\Amat\,.
	\end{equation}
\end{definition}
} \keedit{Notice that $d(\Ycal,\Xcal^h)$ is a discrete version of
  \eqref{eqn:Kolmogorov_distance}, we don't have operator $P$ in
  \eqref{eqn:distance} since it is implicit in the norm
  $\|\cdot\|_\Amat$.}

\steveedit{Similar to the previous section, we show} that
$E(\Ycal)$ is related to $d(\Ycal,\Xcal^h)$.
\blueedit{
In Definition~\ref{def:energy} the energy $E(\Zcal)$ is defined for a $\Bmat$-orthonormal basis $\Zmat$, and for consistency we
assume $\Ymat\in\mathbb{R}^{n\times k}$ collects a $\Bmat$-orthonormal basis of space $\Ycal$. 
}
Since $\Xmat$ spans the entire space, we can express $\Ymat$ as follows:
\begin{equation} \label{eq:ss3}
\Ymat=\Xmat  \Cmat = \Xmat^h \Cmat^h + \Xmat^l \Cmat^l 
\end{equation}
where $\Cmat\in\mathbb{R}^{n\times k}$. 
 \steveedit{The columns of $\Cmat$ are orthonormal} because from
 \eqref{eq:ss4} and the definition of $\Ymat$
 , we have
\begin{equation} \label{eq:Corth}
\Ymat = \Xmat \Cmat \; \Rightarrow \; \mathbbm{1} = \Ymat^\top \Bmat
\Ymat = \Cmat^\top \Xmat^\top \Bmat \Xmat \Cmat = \Cmat^\top \Cmat.
\end{equation}
We denote by $\Cmat^h$ the upper $\mathbb{R}^{k\times k}$ portion of
$\Cmat$, and by $\Cmat^l$ the lower $\mathbb{R}^{(n-k)\times k}$
portion. Denoting the elements of $\Cmat$ by $c_{ji}$, we have
\begin{equation} \label{eq:def.cji}
\Cmat^h = [c_{ji}]_{j=1,2,\dotsc,k; \, i=1,2,\dotsc,k}, \quad
\Cmat^l = [c_{ji}]_{j=k+1,k+2,\dotsc,n; \, i=1,2,\dotsc,k}.
\end{equation}
By orthonormality of $\Cmat$, it follows that
\[
\sum_{j=1}^k c_{ji}^2 + \sum_{j=k+1}^n c_{ji}^2 = 1, \quad i=1,2,\dotsc,k.
\]
and thus
\begin{equation} \label{eq:Csum}
\left[ C^{l\top} C^l \right]_{ii} = 1 - \left[ C^{h\top} C^h \right]_{ii} =
1-\sum_{j=1}^k c_{ji}^2,
\quad i=1,2,\dotsc,k.
\end{equation}

\begin{lemma}\label{lemma:trace_CC}
The trace of $\Cmat^{l\top}\Cmat^l$ is bounded by energy difference between the optimal space \keedit{$\Xcal^h$} and the proposed space \keedit{$\Ycal$}
\begin{equation} \label{eq:ss7}
\Tr(\Cmat^{l\top} \Cmat^l) \leq \frac{k\left(E(\Xcal^h)-E(\Ycal)\right)}{\lambda_k-\lambda_{k+1}}\,.
\end{equation} 
Furthermore, $\Cmat^h$ is invertible if 
\begin{equation} \label{eq:sw9}
E(\Xcal^h)-E(\Ycal) < \frac{\lambda_k -\lambda_{k+1}}{k}\,.
\end{equation}
\end{lemma}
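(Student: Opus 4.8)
The plan is to evaluate the two energies explicitly in the generalized eigenbasis and then reduce the inequality \eqref{eq:ss7} to an elementary estimate on the diagonal of an orthogonal projection matrix.

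First I would compute $E(\Xcal^h)$ and $E(\Ycal)$. Since the columns of $\Xmat^h=[x_1,\dots,x_k]$ are $\Bmat$-orthonormal and satisfy $\Xmat^{h\top}\Amat\Xmat^h=\text{diag}(\lambda_1,\dots,\lambda_k)$ by \eqref{eq:ss4}, Definition~\ref{def:energy} gives $E(\Xcal^h)=\tfrac1k\sum_{i=1}^k\lambda_i$. For $\Ycal$, I substitute $\Ymat=\Xmat\Cmat$ and use $\Xmat^\top\Bmat\Xmat=\mathbbm{1}$, $\Xmat^\top\Amat\Xmat=\Lambda$ from \eqref{eq:sw65}, together with $\Cmat^\top\Cmat=\mathbbm{1}$ from \eqref{eq:Corth}; this yields $\Tr(\Ymat^\top\Bmat\Ymat)=k$ and $\Tr(\Ymat^\top\Amat\Ymat)=\Tr(\Cmat^\top\Lambda\Cmat)$, hence $E(\Ycal)=\tfrac1k\Tr(\Cmat^\top\Lambda\Cmat)$.

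Next I would introduce $\Pmat:=\Cmat\Cmat^\top\in\mathbb{R}^{n\times n}$, which by $\Cmat^\top\Cmat=\mathbbm{1}$ is an orthogonal projection of rank $k$; its diagonal entries $p_j:=\Pmat_{jj}=\sum_{i=1}^k c_{ji}^2$ lie in $[0,1]$ and satisfy $\sum_{j=1}^n p_j=\Tr(\Pmat)=k$. By \eqref{eq:def.cji} one has $\Tr(\Cmat^{l\top}\Cmat^l)=\sum_{j=k+1}^n p_j=:T$, and hence $\sum_{j=1}^k(1-p_j)=k-(k-T)=T$ as well. Writing $\Tr(\Cmat^\top\Lambda\Cmat)=\Tr(\Lambda\Pmat)=\sum_{j=1}^n\lambda_j p_j$ and rearranging, I obtain
\[
k\bigl(E(\Xcal^h)-E(\Ycal)\bigr)\;=\;\sum_{j=1}^k\lambda_j(1-p_j)\;-\;\sum_{j=k+1}^n\lambda_j p_j .
\]
Since $1-p_j\ge 0$ and $\lambda_j\ge\lambda_k$ for $j\le k$, the first sum is at least $\lambda_k T$; since $p_j\ge 0$ and $\lambda_j\le\lambda_{k+1}$ for $j\ge k+1$, the second sum is at most $\lambda_{k+1}T$. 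Thus the right-hand side is at least $(\lambda_k-\lambda_{k+1})T$, and dividing by $\lambda_k-\lambda_{k+1}>0$ gives \eqref{eq:ss7}.

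For the invertibility claim, \eqref{eq:Csum} gives $\Cmat^{h\top}\Cmat^h=\mathbbm{1}-\Cmat^{l\top}\Cmat^l$, so $\Cmat^h\in\mathbb{R}^{k\times k}$ is invertible precisely when every eigenvalue of the symmetric positive semidefinite matrix $\Cmat^{l\top}\Cmat^l$ lies strictly below $1$; under \eqref{eq:sw9} the bound \eqref{eq:ss7} forces $\Tr(\Cmat^{l\top}\Cmat^l)<1$, which in particular bounds its largest eigenvalue, so $\mathbbm{1}-\Cmat^{l\top}\Cmat^l$ is positive definite. I expect the only step needing care to be the index bookkeeping that splits $k(E(\Xcal^h)-E(\Ycal))$ into sums over the "high" indices $j\le k$ and the "low" indices $j>k$: one must invoke $\Tr(\Pmat)=k$ to match $\sum_{j\le k}(1-p_j)$ with $\sum_{j>k}p_j$ so that the two one-sided eigenvalue estimates combine into the spectral-gap factor $\lambda_k-\lambda_{k+1}$. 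A secondary subtlety is that $\Cmat^h$ itself is generally not $\Bmat$- or Euclidean-orthonormal, which is why the invertibility argument must pass through $\Cmat^{h\top}\Cmat^h$ rather than through $\Cmat^h$ directly.
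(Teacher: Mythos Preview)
Your proof is correct and follows essentially the same route as the paper. Both arguments compute $k\bigl(E(\Xcal^h)-E(\Ycal)\bigr)=\Tr(\Lambda^h-\Cmat^{h\top}\Lambda^h\Cmat^h)-\Tr(\Cmat^{l\top}\Lambda^l\Cmat^l)$, bound the second trace above by $\lambda_{k+1}\Tr(\Cmat^{l\top}\Cmat^l)$ and the first below by $\lambda_k\Tr(\Cmat^{l\top}\Cmat^l)$, and then deduce invertibility of $\Cmat^h$ from $\|\Cmat^{l\top}\Cmat^l\|\le\Tr(\Cmat^{l\top}\Cmat^l)<1$. Your packaging via the rank-$k$ projection $\Pmat=\Cmat\Cmat^\top$ and its diagonal entries $p_j\in[0,1]$ is a clean way to see why the coefficients $1-\sum_i c_{ji}^2$ are nonnegative (needed for the $\lambda_k$ lower bound), a point the paper uses implicitly; one minor citation fix is that the full matrix identity $\Cmat^{h\top}\Cmat^h=\mathbbm{1}-\Cmat^{l\top}\Cmat^l$ follows from \eqref{eq:Corth} rather than from \eqref{eq:Csum}, which records only the diagonal.
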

\begin{proof}
\steveedit{ We have from \eqref{eq:Corth} that}
\begin{equation}\label{eqn:Yortho}
\Cmat^{h\top} \Cmat^h+\Cmat^{l\top}\Cmat^l =\mathbbm{1}\,.
\end{equation}
Since both $\Xmat^h$ and $\Ymat$ are $\Bmat$-orthonormal and have $k$
columns, we have:
\begin{equation*}
\Tr \left(\Xmat^{h\top}\Bmat \Xmat^h \right)=\Tr \left(\Ymat^\top\Bmat \Ymat \right)=k\,.
\end{equation*}
By substituting $\Xmat^h$ and $\Ymat$ into the definition of
energy~\eqref{eqn:energy}, we have
\begin{equation*}
k\left(E(\Xcal^h)-E(\Ycal)\right)=\Tr \left(\Xmat^{h\top}\Amat\Xmat^h-\Ymat^\top
\Amat\Ymat \right).
\end{equation*}
By substituting for $\Ymat$ from \eqref{eq:ss3}, and using
\eqref{eq:ss4}, we have
\begin{equation}\label{eqn:E}
\begin{aligned}
k\left(E(\Xcal^h)-E(\Ycal)\right)&=\Tr \left(\Xmat^{h\top}\Amat\Xmat^h -\Cmat^{h\top} \Xmat^{h\top} \Amat\Xmat^h\Cmat^h-\Cmat^{l\top} \Xmat^{l\top} \Amat\Xmat^l\Cmat^l \right) \\
&=\Tr \left( \Lambda^h-\Cmat^{h\top} \Lambda^h\Cmat^h-\Cmat^{l\top}\Lambda^l \Cmat^l \right)\,,
\end{aligned}
\end{equation}
\steveedit{where $\Lambda^h := \text{diag} (\lambda_1,\lambda_2,\dotsc,\lambda_k)$ and
$\Lambda^l := \text{diag} (\lambda_{k+1},\lambda_{k+2},\dotsc,\lambda_n)$.}
For the terms on the right-hand side of \eqref{eqn:E}, we have
\begin{equation}
\label{eqn:lambdak+1}
\Tr \left(\Cmat^{l\top}\Lambda^l\Cmat^l \right) \leq \lambda_{k+1}
\Tr \left(\Cmat^{l\top} \Cmat^l \right) \,,
\end{equation}
and that
\begin{align}
\nonumber
\Tr \left( \Lambda^h-\Cmat^{h\top} \Lambda^h\Cmat^h \right) & = \sum_{j=1}^k \lambda_j - \sum_{i=1}^k \sum_{j=1}^k \lambda_j c_{ji}^2 \\
\nonumber
& = \sum_{j=1}^k \lambda_j  \left(1 - \sum_{i=1}^k c_{ji}^2 \right) \\
\nonumber
& \ge \lambda_k \sum_{j=1}^k \left(1 - \sum_{i=1}^k c_{ji}^2 \right) \\
\label{eqn:lambdak}
& = \lambda_k \Tr \left( \Cmat^{l\top} \Cmat^l \right)
\end{align}
where we  used  \eqref{eq:Csum}.
By substituting~\eqref{eqn:lambdak+1} and~\eqref{eqn:lambdak} into~\eqref{eqn:E}, we obtain
\begin{align*}
k\left(E(\Xcal^h)-E(\Ycal)\right)&\geq (\lambda_k - \lambda_{k+1})  \Tr \left(\Cmat^{l\top} \Cmat^l \right)\,,
\end{align*}
which is equivalent to \eqref{eq:ss7}.

When condition \eqref{eq:sw9} holds, we have from \eqref{eq:ss7} that
$\Tr(\Cmat^{l\top} \Cmat^l) <1$. Thus since $\Cmat^{h\top}\Cmat^h =
\mathbbm{1} - \Cmat^{l\top}\Cmat^l$ and setting $\| \Cmat^{l\top}\Cmat^l
\|\leq \Tr(\Cmat^{l\top}\Cmat^l)<1$, we have that
$\Cmat^{h\top}\Cmat^h$ is nonsingular, so that the $k \times k$ matrix
$\Cmat^h$ is nonsingular.
\end{proof}

We finally use energy distance $E(\Xcal^h)-E(\Ycal)$ to estimate the 
\keedit{Kolmogorov distance} $d(\Ycal,\Xcal^h)$, as follows.

\begin{theorem}\label{thm:energy_to_distance}
\keedit{Considering the optimal space $\Xcal^h$ and the proposed
  space $\Ycal$, if}
\begin{equation} \label{eq:sw13}
E(\Xcal^h) - E(\Ycal) \leq \frac{\lambda_k -\lambda_{k+1} }{2k} \,,
\end{equation}
then we have 
\begin{equation}\label{eqn:thm_est}
	d(\Ycal,\Xcal^h) \leq \sqrt{\lambda_{k+1} \frac{\| \Cmat^{l\top}\Cmat^l \|}{1- \| \Cmat^{l\top}\Cmat^l \|} }\,.
\end{equation}
and furthermore
\begin{equation}\label{eqn:thm_est2}
d(\Ycal,\Xcal^h)\leq \sqrt{2\lambda_{k+1}\frac{k\left(E(\Xcal^h)-E(\Ycal)\right)}{\lambda_k-\lambda_{k+1}} }\,.
\end{equation}
\end{theorem}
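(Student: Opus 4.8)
The plan is to turn the supremum--infimum in \eqref{eqn:distance} into a purely linear-algebraic bound on the matrix $\Cmat^l(\Cmat^h)^{-1}$. First I would introduce coordinates: every $z\in\Xcal^h$ with $\|z\|_\Bmat\le 1$ can be written $z=\Xmat^h\vec a$ with $\|\vec a\|\le 1$ (because $\Xmat^h$ is $\Bmat$-orthonormal), and every $y\in\Ycal$ is $y=\Ymat\vec b=\Xmat^h\Cmat^h\vec b+\Xmat^l\Cmat^l\vec b$ for some $\vec b\in\mathbb{R}^k$, using \eqref{eq:ss3}. Expanding $\|z-y\|_\Amat^2$ with the orthogonality relations \eqref{eq:ss4} (so that $\Xmat^{h\top}\Amat\Xmat^h=\Lambda^h$, $\Xmat^{l\top}\Amat\Xmat^l=\Lambda^l$, and the cross term $\Xmat^{h\top}\Amat\Xmat^l$ vanishes) gives
\[
\|z-y\|_\Amat^2=(\vec a-\Cmat^h\vec b)^\top\Lambda^h(\vec a-\Cmat^h\vec b)+(\Cmat^l\vec b)^\top\Lambda^l(\Cmat^l\vec b)\,.
\]

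Next, I would observe that \eqref{eq:sw13} is strictly stronger than the hypothesis \eqref{eq:sw9} of Lemma~\ref{lemma:trace_CC} (since $\lambda_k>\lambda_{k+1}$), so $\Cmat^h$ is invertible and the choice $\vec b=(\Cmat^h)^{-1}\vec a$ is admissible; it annihilates the first term. Using $\Lambda^l\preceq\lambda_{k+1}\mathbbm{1}$ and $\|\vec a\|\le 1$, this yields
\[
\inf_{y\in\Ycal}\|z-y\|_\Amat^2\le\lambda_{k+1}\bigl\|\Cmat^l(\Cmat^h)^{-1}\vec a\bigr\|^2\le\lambda_{k+1}\bigl\|\Cmat^l(\Cmat^h)^{-1}\bigr\|^2\,,
\]
and hence $d(\Ycal,\Xcal^h)\le\sqrt{\lambda_{k+1}}\,\|\Cmat^l(\Cmat^h)^{-1}\|$ after taking the supremum over $z$.

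It then remains to evaluate $\|\Cmat^l(\Cmat^h)^{-1}\|$ in terms of $\Mmat:=\Cmat^{l\top}\Cmat^l$. From the orthonormality identity \eqref{eqn:Yortho} one has $\Cmat^{h\top}\Cmat^h=\mathbbm{1}-\Mmat$, and therefore
\[
\bigl(\Cmat^l(\Cmat^h)^{-1}\bigr)^\top\bigl(\Cmat^l(\Cmat^h)^{-1}\bigr)=(\Cmat^h)^{-\top}\Mmat(\Cmat^h)^{-1}=\bigl(\Cmat^h\Cmat^{h\top}\bigr)^{-1}-\mathbbm{1}\,.
\]
Since $\Cmat^h\Cmat^{h\top}$ shares its eigenvalues with $\Cmat^{h\top}\Cmat^h=\mathbbm{1}-\Mmat$, this symmetric positive semidefinite matrix has spectral norm $\|\Mmat\|/(1-\|\Mmat\|)$, which gives \eqref{eqn:thm_est}. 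Finally, Lemma~\ref{lemma:trace_CC} provides $\|\Mmat\|\le\Tr(\Mmat)\le k\bigl(E(\Xcal^h)-E(\Ycal)\bigr)/(\lambda_k-\lambda_{k+1})$, and \eqref{eq:sw13} forces this bound to be at most $1/2$; hence $1-\|\Mmat\|\ge 1/2$, so $\|\Mmat\|/(1-\|\Mmat\|)\le 2\|\Mmat\|$, and substituting the trace bound produces \eqref{eqn:thm_est2}.

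The two expansions are routine. The step I expect to require the most care is the identity in the last paragraph: recognizing that $(\Cmat^h)^{-\top}\Mmat(\Cmat^h)^{-1}$ equals $(\Cmat^h\Cmat^{h\top})^{-1}-\mathbbm{1}$, that this is a monotone matrix function of $\Mmat$ whose spectral norm is therefore governed by $\|\Mmat\|$ alone, and then checking that \eqref{eq:sw13} simultaneously supplies the invertibility of $\Cmat^h$ and the factor-of-two slack that turns \eqref{eqn:thm_est} into \eqref{eqn:thm_est2}. Everything downstream of the invertibility of $\Cmat^h$ is then elementary algebra.
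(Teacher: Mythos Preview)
Your argument is correct and reaches the same bounds as the paper, but by a cleaner route. The paper computes the \emph{exact} minimizer $\beta^\ast_\alpha=(\Ymat^\top\Amat\Ymat)^{-1}\Ymat^\top\Amat\Xmat^h\alpha$, substitutes it back, and then must work through the matrix identity $(\mathbbm{1}+\Amat)^{-1}=\mathbbm{1}-\Amat+(\mathbbm{1}+\Amat)^{-1}\Amat^2$ to discard a positive semidefinite remainder and arrive at the upper bound $\alpha^\top(\Cmat^h)^{-\top}\Cmat^{l\top}\Lambda^l\Cmat^l(\Cmat^h)^{-1}\alpha$. You instead pick the admissible but suboptimal $\vec b=(\Cmat^h)^{-1}\vec a$, which kills the $\Lambda^h$ term outright and lands on the \emph{same} quadratic form in one line; since any feasible $\vec b$ majorizes the infimum, this is legitimate and strictly simpler. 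In the final step the paper bounds that quadratic form by submultiplicativity, $\|(\Cmat^{h\top}\Cmat^h)^{-1}\|\,\|\Cmat^{l\top}\Lambda^l\Cmat^l\|$, whereas your identity $(\Cmat^h)^{-\top}\Mmat(\Cmat^h)^{-1}=(\Cmat^h\Cmat^{h\top})^{-1}-\mathbbm{1}$ together with the shared spectrum of $\Cmat^h\Cmat^{h\top}$ and $\Cmat^{h\top}\Cmat^h=\mathbbm{1}-\Mmat$ gives the norm $\|\Mmat\|/(1-\|\Mmat\|)$ exactly. Both routes produce \eqref{eqn:thm_est}; yours avoids the exact-minimizer computation and the $(\mathbbm{1}+\Amat)^{-1}$ trick entirely. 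The passage from \eqref{eqn:thm_est} to \eqref{eqn:thm_est2} via Lemma~\ref{lemma:trace_CC} and \eqref{eq:sw13} is identical in both proofs.
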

\begin{proof}
Choosing an arbitrary $z = \Xmat^h \alpha$ with $\|\alpha\|\leq1$, we
look for $\beta$ such that $y=\Ymat \beta$ is closest to $z$ in
$\Amat$-norm. 
The solution, obtained from the minimization problem
\begin{equation} \label{eq:def.f}
\min_\beta f_{\alpha}(\beta) :=\| y-z\|_\Amat^2 = (\Ymat \beta -\Xmat^h\alpha)^\top \Amat (\Ymat \beta -\Xmat^h\alpha)\,
\end{equation}
 is
\begin{equation} \label{eq:def.betas}
	\beta^\ast_{\alpha} = (\Ymat^\top \Amat \Ymat)^{-1} \Ymat^\top \Amat \Xmat^h \alpha\,.
\end{equation}
\steveedit{Note from the definition \eqref{eqn:distance} that
  \begin{equation} \label{eq:fd}
  d(\Ycal,\Xcal^h) = \sup_{\|\alpha\| \le 1} \sqrt{f_{\alpha} (\beta^{\ast}_{\alpha} )}.
  \end{equation}
} 
From \eqref{eq:ss4} and \eqref{eq:ss3}, we have
\begin{equation} \label{eq:YAY}
\Ymat^\top \Amat \Ymat = \Cmat^\top \Lambda \Cmat =\Cmat^{h \top}
\Lambda^h \Cmat^h + \Cmat^{l \top} \Lambda^l \Cmat^l,
\end{equation}
which is invertible, since $\Cmat$ has orthonormal columns and
$\Lambda$ is diagonal and positive definite. Thus $\beta^\ast_{\alpha}$ is well
defined by \eqref{eq:def.betas}. By substituting \eqref{eq:def.betas}
into \eqref{eq:def.f}, we obtain
\begin{equation}\label{eqn:min}
f_{\alpha}(\beta_{\alpha}^\ast) = -\alpha^\top (\Ymat^\top \Amat \Xmat^h)^\top
(\Ymat^\top \Amat \Ymat)^{-1} (\Ymat^\top \Amat \Xmat^h)\alpha
+\alpha^\top \Xmat^{h\top} \Amat \Xmat^ h \alpha.
\end{equation}
Note from \eqref{eq:ss3} and  \eqref{eq:sw65} that
\[
\Amat \Ymat = \Amat \Xmat^h \Cmat^h + \Amat \Xmat^l \Cmat^l =
\Bmat \Xmat^h \Lambda^h \Cmat^h + \Bmat \Xmat^l \Lambda^l \Cmat^l,
\]
so from \eqref{eq:ss4}, we have
\[
\Xmat^{h \top} \Amat \Ymat =  (\Xmat^{h \top} \Bmat \Xmat^h) \Lambda^h \Cmat^h + (\Xmat^{h \top} \Bmat \Xmat^l) \Lambda^l \Cmat^l =
\Lambda^h \Cmat^h.
\]
By substituting this equality together with \eqref{eq:YAY} into
\eqref{eqn:min}, \steveedit{and using \eqref{eq:ss4} again,} we have
\begin{equation}\label{eqn:min.a}
f_{\alpha}(\beta^\ast_{\alpha}) = -\alpha^\top (\Lambda^h \Cmat^h) (\Cmat^{h\top}
\Lambda^h \Cmat^h + \Cmat^{l\top}\Lambda^l \Cmat^l)^{-1} (\Lambda^h
\Cmat^h)^\top \alpha +\alpha^\top \Lambda^h \alpha\,.
\end{equation}
Invertibility of $\Cmat^h$ follows from Lemma~\ref{lemma:trace_CC} and
the condition \eqref{eq:sw13}, so that $(\Lambda^h)^{1/2} \Cmat^h$ is
invertible, and we can transform \eqref{eqn:min.a} to
\begin{equation} \label{eq:min.a2}
f_{\alpha}(\beta^\ast_{\alpha}) = -\alpha^\top  (\Lambda^h)^{1/2}  \left[
\mathbbm{1} + (\Cmat^{h\top} (\Lambda^h)^{1/2})^{-1} \Cmat^{l\top}\Lambda^l \Cmat^l
((\Lambda^h)^{1/2} \Cmat^h)^{-1} \right]^{-1} (\Lambda^h)^{1/2} \alpha +
\alpha^T \Lambda^h \alpha.
\end{equation}
For any matrix $\Amat$ with $(\mathbbm{1}+\Amat)$ nonsingular, we have
that
\begin{equation} \label{eq:sw18}
(\mathbbm{1}+\Amat)^{-1} = \mathbbm{1} - \Amat + (\mathbbm{1}+\Amat)^{-1} \Amat^2.
\end{equation}
Moreover, if $\Amat$ is symmetric positive semidefinite, the last term
$(\mathbbm{1}+\Amat)^{-1} \Amat^2$ is symmetric positive semidefinite,
since if we write the eigenvalue decomposition of $\Amat$ as $\Amat =
\Umat \Smat \Umat^{\top}$ where $\Umat$ is orthogonal and $\Smat$ is
nonnegative diagonal, we have that $(\mathbbm{1}+\Amat)^{-1} \Amat^2 =
\Amat^2 (\mathbbm{1}+\Amat)^{-1} = \Umat (\mathbbm{1}+\Smat)^{-1}
\Smat^2 \Umat^{\top}$. Thus for any vector $z$, we have from
\eqref{eq:sw18} that
\[
-z^\top (\mathbbm{1}+\Amat)^{-1} z + z^Tz \le -z^T(\mathbbm{1}-\Amat) z + z^Tz =
z^T \Amat z.
\]
By substituting $\Amat = (\Cmat^{h\top} (\Lambda^h)^{1/2})^{-1}
\Cmat^{l\top}\Lambda^l \Cmat^l ((\Lambda^h)^{1/2} \Cmat^h)^{-1}$ and
$z = (\Lambda^h)^{1/2} \alpha$ into this expression, we have from
\eqref{eq:min.a2} that
\begin{align} 
\nonumber
f_{\alpha}(\beta^\ast_{\alpha}) & \steveedit{\le} \alpha^\top  (\Lambda^h)^{1/2} (\Cmat^{h\top} (\Lambda^h)^{1/2})^{-1}
\Cmat^{l\top}\Lambda^l \Cmat^l ((\Lambda^h)^{1/2} \Cmat^h)^{-1} (\Lambda^h)^{1/2} \alpha \\
\nonumber
&  = 
\alpha^\top   (\Cmat^h)^{-\top} \Cmat^{l\top}\Lambda^l \Cmat^l (\Cmat^h)^{-1} \alpha  \\
& \le \| \alpha \|^2 \| (\Cmat^{h\top} \Cmat^h)^{-1} \| \| \Cmat^{l\top}\Lambda^l \Cmat^l \|.
\label{eq:min.a3}
\end{align}
Note that $\|\alpha\|\leq 1$, $ \|\Cmat^{l\top}\Lambda^l \Cmat^l\| \leq \lambda_{k+1} \| \Cmat^{l\top}\Cmat^l \|$ and
\begin{align*}
\left\| (\Cmat^{h\top}\Cmat^h)^{-1} \right\| = \left\| (\mathbbm{1}-\Cmat^{l\top}\Cmat^l )^{-1} \right\| \leq \sum_{i=0}^\infty \left\| \Cmat^{l\top}\Cmat^l \right\|^i =\frac{1}{1- \| \Cmat^{l\top}\Cmat \|}\,,
\end{align*}
so by substituting into \eqref{eq:min.a3}, we have
\begin{equation}
f_{\alpha}(\beta^\ast_{\alpha}) \leq \lambda_{k+1} \frac{\| \Cmat^{l\top}\Cmat^l\|}{1-
  \| \Cmat^{l\top}\Cmat^l \|}\,, \quad \mbox{for all $\alpha$ with $\| \alpha\| \le 1$,}
\end{equation}
\steveedit{which because of \eqref{eq:fd} yields \eqref{eqn:thm_est}.}

Under condition \eqref{eq:sw13} we have from
Lemma~\ref{lemma:trace_CC} that
\[
\| \Cmat^{l\top} \Cmat \| \le \Tr (\Cmat^{l\top} \Cmat) \le 
\frac{k\left(E(\Xcal^h)-E(\Ycal)\right)}{\lambda_k-\lambda_{k+1}} \le
\frac12,
\]
so that
\[
\frac{\| \Cmat^{l\top}\Cmat^l \|}{1- \| \Cmat^{l\top}\Cmat^l \|} \le 2 
\| \Cmat^{l\top}\Cmat^l \| \le 2 
 \frac{k\left(E(\Xcal^h)-E(\Ycal)\right)}{\lambda_k-\lambda_{k+1}},
\]
yielding  \eqref{eqn:thm_est2}.
\end{proof}

\subsection{Criteria Used in Random Sampling for Local Basis Functions}\label{sec:basis_energy}

In our local basis construction problem, we identify $\Amat$ and
$\Bmat$ in Section~\ref{sec:highD} with $\Smat$ and $\Smat^\ast$,
respectively, from \eqref{eqn:GEP}. An energy term is constructed
similarly.
\begin{definition}[Energy of a function space]
  \blueedit{Given the function space $\Gamma_n$,
    let
    $\{\tilde{\gamma}_1,\tilde{\gamma}_2,\dotsc,\tilde{\gamma}_n\}$ be
    an $\mathcal{E}(\omega^*)$-orthonormal basis for $\Gamma_n$. The
    energy of $\Gamma_n$ is defined by
\begin{equation}\label{eqn:energy_gamma}
E(\Gamma_n) := \frac{\sum_{i=1}^n\langle a(x)|\nabla_x\tilde{\gamma}_i|^2\rangle_{\CalE(\omega)}}{\sum_{i=1}^n\langle a(x)|\nabla_x\tilde{\gamma}_i|^2\rangle_{\CalE(\omega^*)}}\,.
\end{equation}
}
\end{definition}

According to Theorem~\ref{thm:energy_to_distance}, a larger value of
$E$ indicates a smaller angle to the optimal basis set, and thus a
better sampling strategy.


%

\blueedit{Theorem~\ref{thm:energy_to_distance} suggests that a
    sampling strategy that provides a matrix $\Ymat$ of discretized
    basis functions with higher energy $E(\Ycal)$ (closer to the
    optimal value of $E(\Xcal^h)$) will result in a smaller 
    \blueedit{Kolmogorov distance}, and thus a better approximation to the optimal space
    $\Xcal^h$.
    Larger values of $E$ are achieved when the
    samples have their energies largely supported in the
    interior. This further suggests that construction of $a$-harmonic
    functions through random sampling of singular boundary conditions
    may not be the best strategy, because the boundary layer close to
    $\omega^\ast$ quickly damps out the solution and the energies
    concentrated in the margin $\omega^\ast \char`\\ \omega$, leading
    to relatively small energy in the interior and a smaller value of
    $E$ in \eqref{eqn:energy_gamma}.  These observations are borne out
    by the numerical experiments reported in the next section.
    Sampling strategies that avoid boundary layers are thereby
    preferred, which suggests that Random Gaussian (strategy 4) and
    Random smooth boundary sampling (strategy 7) are likely to give
    better results. Our computational results support this
    claim. 
}

\blueedit{
We note that enlarging over-sampling size is another efficient way of
getting rid of boundary layers, but that generally leads to a smaller
Energy value in~\eqref{eqn:energy_gamma}.
}

\begin{theorem}
In a two-dimensional space, 
\keedit{for any small $\varepsilon>0$ and $n$ sufficiently large,}
given any $u\in H_a(\omega^\ast)/\mathbb{R}$
and a subspace $\Gamma_n=\Span \{\gamma_1,\gamma_2,\dotsc,\gamma_n\}$
spanned by random samples of $a$-harmonic functions, the accuracy of
approximating $u$ with a function $\gamma$ from $\Gamma_n$ is bounded
by the following estimate:
\begin{equation*}
\min_{\gamma\in\Gamma_n} \, \|u-\gamma\|_{\CalE(\omega)} \leq \|u\|_{\CalE(\omega^\ast)} \left( e^{-n^{1/3-\varepsilon}} + d(\Gamma_n,\Psi_n) \right)\,,
\end{equation*}
where $\Psi_n$ is defined in~\eqref{eqn:def_Psi} and $d(\Gamma_n,\Psi_n)$ is defined in equation~\eqref{eqn:distance}.
\end{theorem}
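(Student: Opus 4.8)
The plan is to interpose the \emph{optimal} confined space $\Phi_n=P\Psi_n$ of Section~\ref{sec:low_rank} between $u$ and $\Gamma_n$, and then to combine two estimates that are already available: the almost-exponential decay of the Kolmogorov $n$-width from Theorem~\ref{thm:accuracy} controls how well $\Phi_n$ approximates the confinement of $u$, while the definition \eqref{eqn:distance} of $d(\Gamma_n,\Psi_n)$ controls how well $\Gamma_n$ approximates $\Phi_n$. A triangle inequality in the seminorm $\|\cdot\|_{\CalE(\omega)}$ then assembles the bound.

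First I would let $\psi\in\Psi_n$ be the orthogonal projection of $u$ onto $\Psi_n=\Span\{\psi_1,\dots,\psi_n\}$ with respect to $\langle\cdot,\cdot\rangle_{\CalE(\omega^\ast)}$. Expanding $u=\sum_i a_i\psi_i$ in the $\CalE(\omega^\ast)$-orthonormal eigenbasis of $P^\ast P$ from \eqref{eqn:GEP_P}, we have $\psi=\sum_{i\le n}a_i\psi_i$, so $\|\psi\|_{\CalE(\omega^\ast)}\le\|u\|_{\CalE(\omega^\ast)}$. To estimate the confinement error, note that $u-\psi=\sum_{i>n}a_i\psi_i$, so on $\omega$ we have $P(u-\psi)=\sum_{i>n}a_i\phi_i$ with $\phi_i=P\psi_i$ as in \eqref{eqn:def_Phi}, and $\langle\phi_i,\phi_j\rangle_{\CalE(\omega)}=\langle P^\ast P\psi_i,\psi_j\rangle_{\CalE(\omega^\ast)}=\lambda_i\delta_{ij}$; hence $\|u-\psi\|_{\CalE(\omega)}^2=\sum_{i>n}a_i^2\lambda_i\le\lambda_{n+1}\sum_{i>n}a_i^2\le\lambda_{n+1}\|u\|_{\CalE(\omega^\ast)}^2$. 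Since $\sqrt{\lambda_{n+1}}=d_n(\omega,\omega^\ast)$ by \eqref{eqn:n_width}, and $d_n(\omega,\omega^\ast)\le e^{-n^{1/3-\varepsilon}}$ in a two-dimensional domain by Theorem~\ref{thm:accuracy} (the case $d=2$), this gives $\|u-\psi\|_{\CalE(\omega)}\le e^{-n^{1/3-\varepsilon}}\|u\|_{\CalE(\omega^\ast)}$.

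Next I would bound $\inf_{\gamma\in\Gamma_n}\|\psi-\gamma\|_{\CalE(\omega)}$. Identifying $\Amat\leftrightarrow\Smat$ and $\Bmat\leftrightarrow\Smat^\ast$ as in Section~\ref{sec:basis_energy}, the distance \eqref{eqn:distance} between $\Gamma_n$ (the proposed space) and $\Psi_n$ (the optimal space) reads $d(\Gamma_n,\Psi_n)=\sup_{\psi'\in\Psi_n,\ \|\psi'\|_{\CalE(\omega^\ast)}\le1}\ \inf_{\gamma\in\Gamma_n}\|\psi'-\gamma\|_{\CalE(\omega)}$, the restriction $P$ being absorbed into the $\CalE(\omega)$ norm as noted after \eqref{eqn:distance}. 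By homogeneity (applied to $\psi'=\psi/\|\psi\|_{\CalE(\omega^\ast)}$ together with the fact that $\Gamma_n$ is a subspace) this yields $\inf_{\gamma\in\Gamma_n}\|\psi-\gamma\|_{\CalE(\omega)}\le d(\Gamma_n,\Psi_n)\,\|\psi\|_{\CalE(\omega^\ast)}\le d(\Gamma_n,\Psi_n)\,\|u\|_{\CalE(\omega^\ast)}$. Picking $\gamma$ that attains this minimum and using $\|u-\gamma\|_{\CalE(\omega)}\le\|u-\psi\|_{\CalE(\omega)}+\|\psi-\gamma\|_{\CalE(\omega)}$ then produces the asserted inequality.

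The only delicate point I anticipate is the bookkeeping of which norm is used where. In particular, taking $\psi$ to be the $\CalE(\omega^\ast)$-orthogonal projection of $u$ onto $\Psi_n$ (rather than, say, the best $\CalE(\omega)$-approximant of $Pu$ inside $\Phi_n$) is essential: it keeps the prefactor equal to $\|u\|_{\CalE(\omega^\ast)}$, whereas the naive choice would introduce a spurious factor $1/\sqrt{\lambda_n}$ coming from $\|\phi_i\|_{\CalE(\omega)}^2=\lambda_i$. One should also confirm that $\Gamma_n$ genuinely consists of $a$-harmonic functions on $\omega^\ast$ (true by construction, Stage~1-B of Algorithm~\ref{alg:1}), so that the identification of $\Gamma_n$ and $\Psi_n$ with the finite-dimensional subspaces of Section~\ref{sec:highD} is legitimate and $d(\Gamma_n,\Psi_n)$ is precisely the quantity appearing in \eqref{eqn:distance}.
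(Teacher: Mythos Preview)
Your proposal is correct and follows essentially the same route as the paper's proof: project $u$ onto $\Psi_n$ in the $\CalE(\omega^\ast)$ inner product, bound the confinement error by $\sqrt{\lambda_{n+1}}\le e^{-n^{1/3-\varepsilon}}$ via the eigenexpansion and Theorem~\ref{thm:accuracy}, bound the remaining term by $d(\Gamma_n,\Psi_n)\|\psi\|_{\CalE(\omega^\ast)}$ using the definition~\eqref{eqn:distance} and homogeneity, and conclude with the triangle inequality. Your closing remarks about why the $\CalE(\omega^\ast)$-projection (rather than the $\CalE(\omega)$-best approximant) is the right intermediary are apt and make explicit a point the paper leaves implicit.
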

\begin{proof}
Without loss of generality, we assume $\|u\|_{\CalE(\omega^\ast)}=1$.
Consider the optimal basis $\{\psi_i\}_{i=1}^\infty\subset
H_a(\omega^\ast)/\mathbb{R}$ computed in~\eqref{eqn:GEP_P}, 
for which we have
\[
\langle \psi_i,\psi_j \rangle_{\CalE(\omega^\ast)} = \delta_{ij}\,,\quad \langle \psi_i,\psi_j \rangle_{\CalE(\omega)} = \lambda_i \delta_{ij}.
\]
We therefore have scalars $u_1,u_2, \dotsc$ such that
\[
u = \sum_{i=1}^\infty u_i \psi_i,\quad \sum_{i=1}^\infty u_i^2 = 1\,.
\]
Defining $\tilde{u}\in H_a(\omega^\ast)/\mathbb{R}$ by
\[
\tilde{u} = \sum_{i=1}^n u_i\psi_i \,,
\]
the restriction $v = P\tilde{u}\in H_a(\omega)/\mathbb{R}$ has
\[
\|u-v\|_{\CalE(\omega)} = \|u-P\tilde{u}\|_{\CalE(\omega)} =\biggl(\sum_{i=n+1}^\infty u_i^2 \lambda_i\biggr)^{1/2}\,.
\]
By the definition \eqref{eqn:distance} of \keedit{Kolmogorov
  distance}, there exists $\gamma\in \Gamma_n$ such that
\[
\|v-\gamma\|_{\CalE(\omega)} \leq \|\tilde{u}\|_{\CalE(\omega^\ast)} d(\Gamma_n,\Psi_n)\,,
\]
where $\Psi_n:=\Span \{\psi_1,\ldots,\psi_n\}$ as in  \eqref{eqn:def_Psi}.
We further note that
\[
\|\tilde{u}\|_{\CalE(\omega^\ast)} = \biggl(\sum_{i=1}^n u_i^2\biggr)^{1/2}\,,
\]
and therefore
\begin{align*}
\|u-\gamma\|_{\CalE(\omega)}&\leq \|u-v\|_{\CalE(\omega)}+\|v-\gamma\|_{\CalE(\omega)}\\
&\leq \biggl(\sum_{i=n+1}^\infty u_i^2\lambda_i\biggr)^{1/2} +\biggl(\sum_{i=1}^n u_i^2\biggr)^{1/2}\, d(\Gamma_n,\Psi_n)\\
&\leq (\lambda_{n+1})^{1/2} +d(\Gamma_n,\Psi_n)\\
&\leq e^{-n^{1/3-\varepsilon}} +d(\Gamma_n,\Psi_n)\,,
\end{align*}
where the last inequality comes from Theorem \ref{thm:accuracy}. 
\end{proof}

\section{Computational Results}\label{sec:numerics}

\keedit{
	We present numerical results in this section that show how the
	Kolmogorov distance of the random sampling subspace to the optimal space 
	decreases with the number of basis
	functions, for different sampling strategies. }
Throughout the section,
the domain $\omega$ and enlarged domain $\omega^\ast$ are defined by
\[
\omega = [-1,1]\times [-1,1], \quad
\omega^\ast = [-1.4,1.4]\times [-1.4,1.4].
\] 
The media $a(x,y)$ is defined to be
\begin{multline}
a(x,y) = \dfrac{1}{5}\left(\dfrac{1.1+\sin(7\pi x)}{1.1+\sin(7\pi y)}+\dfrac{1.1+\sin(9\pi y)}{1.1+\cos(9\pi x)}+\dfrac{1.1+\cos(13\pi y)}{1.1+\cos(13\pi x)}\right.\\
\left.+\dfrac{1.1+\cos(9\pi x)}{1.1+\sin(9\pi y)}+\dfrac{1.1+\sin(7\pi y)}{1.1+\sin(7\pi x)}\right),\quad  (x,y)\in \omega^\ast\,.
\end{multline}
Numerical results will be shown for discretization parameters
$\rd{x}=\rd{y}=1/40$.

The reference solution is obtained from the procedure summarized in
Section~\ref{sec:local_basis}. To find the optimal solution space, we
prepare the entire $a$-harmonic function space by going through all
possible boundary condition configurations, before computing the
general eigenvalue problem~\eqref{eqn:GEP} for basis selection. This
process requires computation of the elliptic
equation~\eqref{eqn:delta} $444$ times (each time with a hat function
on the boundary of $\partial\omega^\ast$ concentrated at a specific
grid point) followed by computation of the generalized eigenpairs of
two matrices of size $444\times 444$.
\keedit{
We then implement all random sampling methods proposed in Section~\ref{sec:methods}. As we see below the seven strategies have varying degrees of efficiency, but all capture the low rank structure of the optimal space.
}

\subsection*{High-Energy Modes}

The first four modes $\{\phi_{1,2,3,4}\}$ of the reference solution
are shown in Figure~\ref{fig:FirstFourModes}. These are obtained by
following the procedure described in Section~\ref{sec:local_basis}. We
note here the presence of boundary layers in $\omega^\ast$, as the
functions exhibit fine scale oscillations near the boundary $\partial
\omega^{\ast}$; moreover, these oscillations in the boundary layer are
\steveedit{trimmed away when the functions are confined to the patch
  $\omega$.}

\begin{figure}[htbp]
  \centering
  \includegraphics[width=\textwidth,height = 200pt]{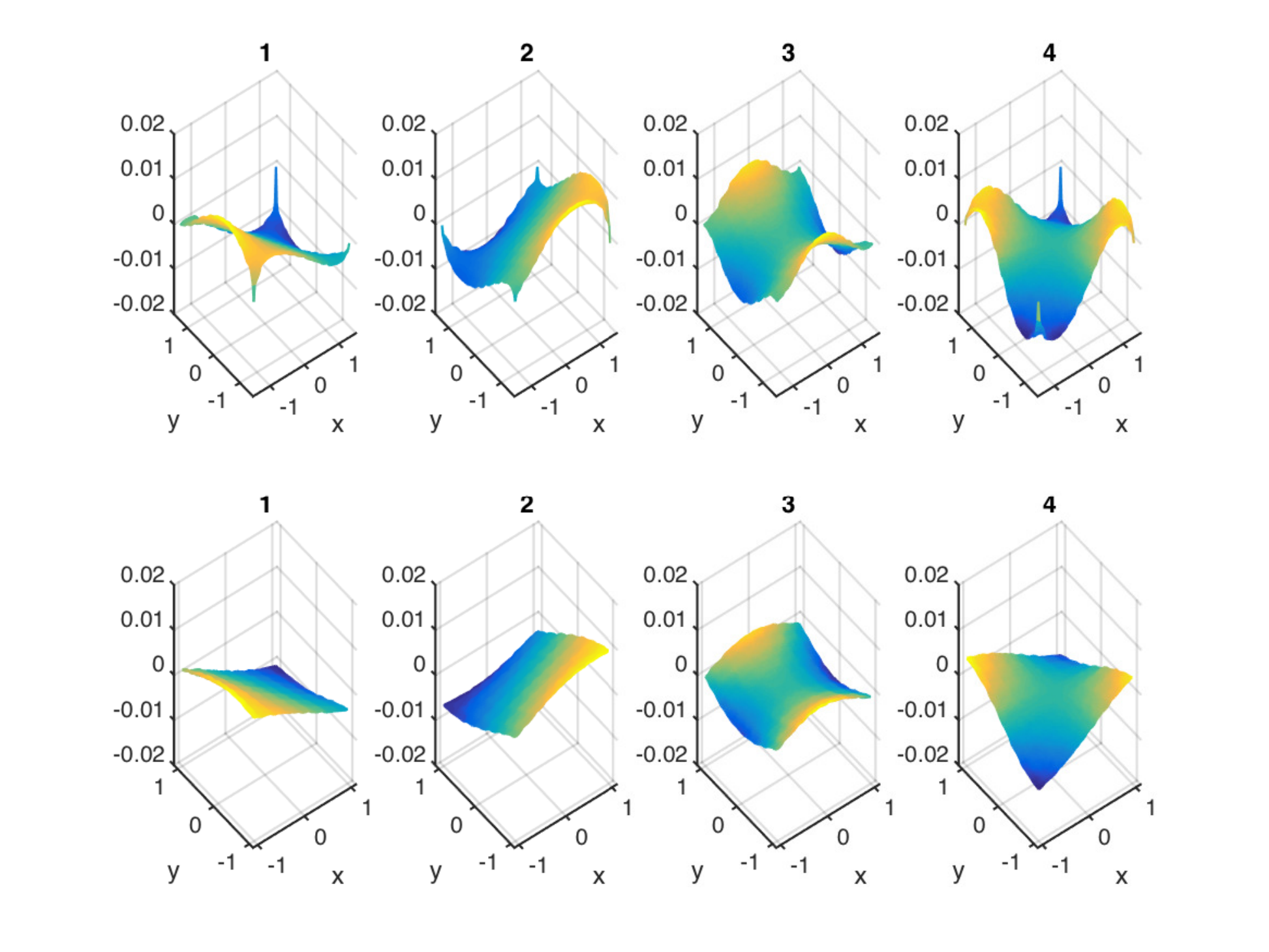}
  \caption{The first row shows the four modes $\phi_{1,2,3,4}$ supported on $\omega^\ast$, and the second row shows the same modes confined in $\omega$. \steveedit{Note that the boundary layers that appear in $\omega^{\ast}$ are not evident in $\omega$.}}
  \label{fig:FirstFourModes}
\end{figure}


\subsection*{Recovery of General Eigenvalues}

We now describe results obtained by random sampling method with the
\steveedit{seven} sampling strategies. For each strategy, we sample only
$20$ $a$-harmonic functions for the computation in
equation~\eqref{eqn:ran_GEP}, hoping that these $20$ random samples
still capture the highest energy modes. In
Figure~\ref{fig:EnergyDecay20}, we plot (in log scale) the $20$
generalized eigenvalues obtained from each of the \keedit{seven}
sampling strategies, together with the leading 20 eigenvalues from the
optimal reference solution.  \blueedit{All methods give almost
    exponential decay of the eigenvalues. By far, the Random Gaussian
    and Smooth Boundary (Strategy 5 and 7) are the best two strategies in tracking the
    eigenvalues of the reference solution.  }

It is expected that as the number of random samples increases, all
random methods should do better at capturing the eigenvalues of the
reference solution. \steveedit{This phenomenon is evident} in
Figure~\ref{fig:EnergyDecay300}, where we use $300$ random samples for
all \keedit{seven} sampling strategies. All except the strategies
involving the full-domain i.i.d.~function and possibly the boundary
i.i.d.~function do well at matching the reference eigenvalues.


\begin{figure}[htbp]
  \centering
  \includegraphics[width=0.45\textwidth]{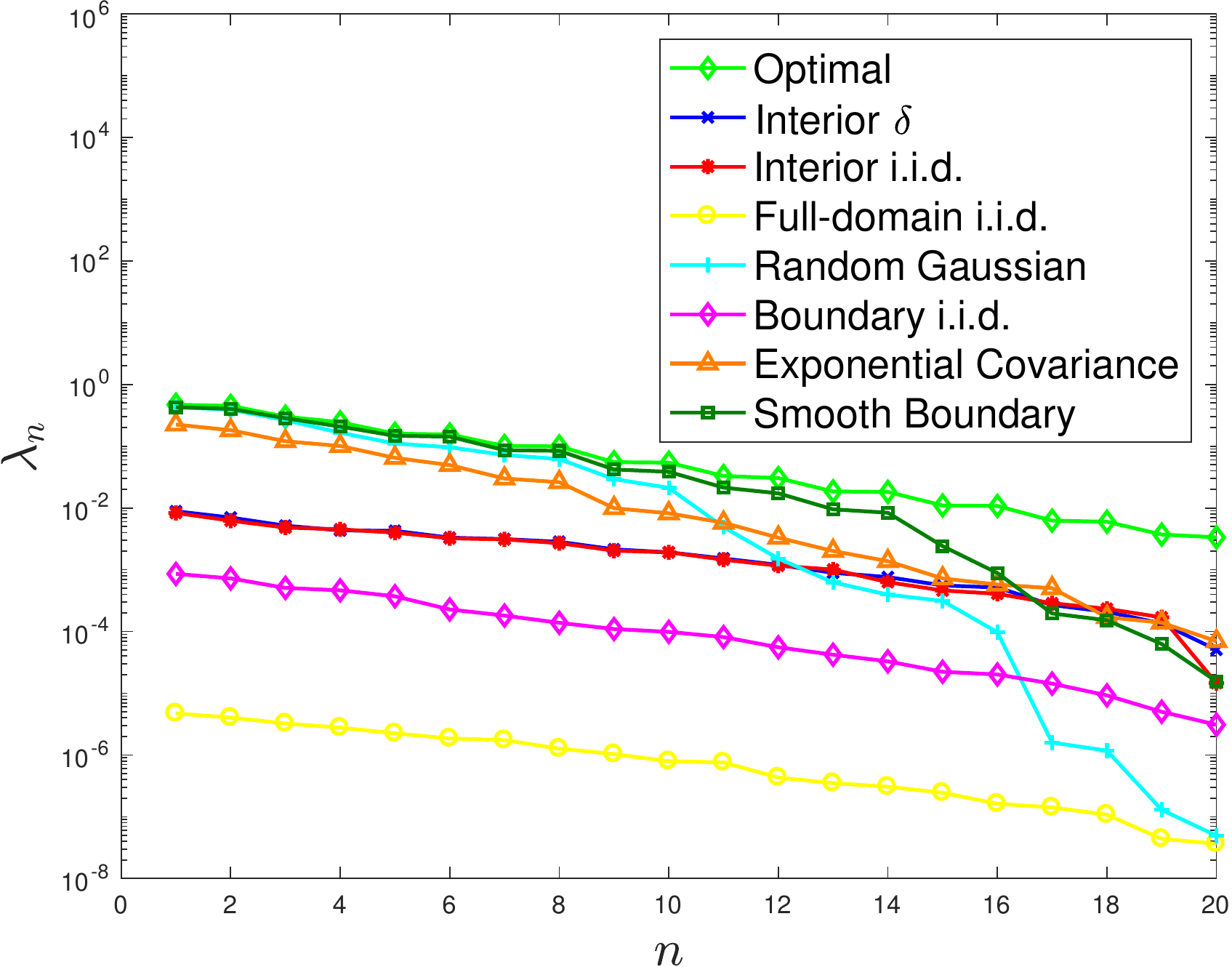}
  \includegraphics[width=0.45\textwidth]{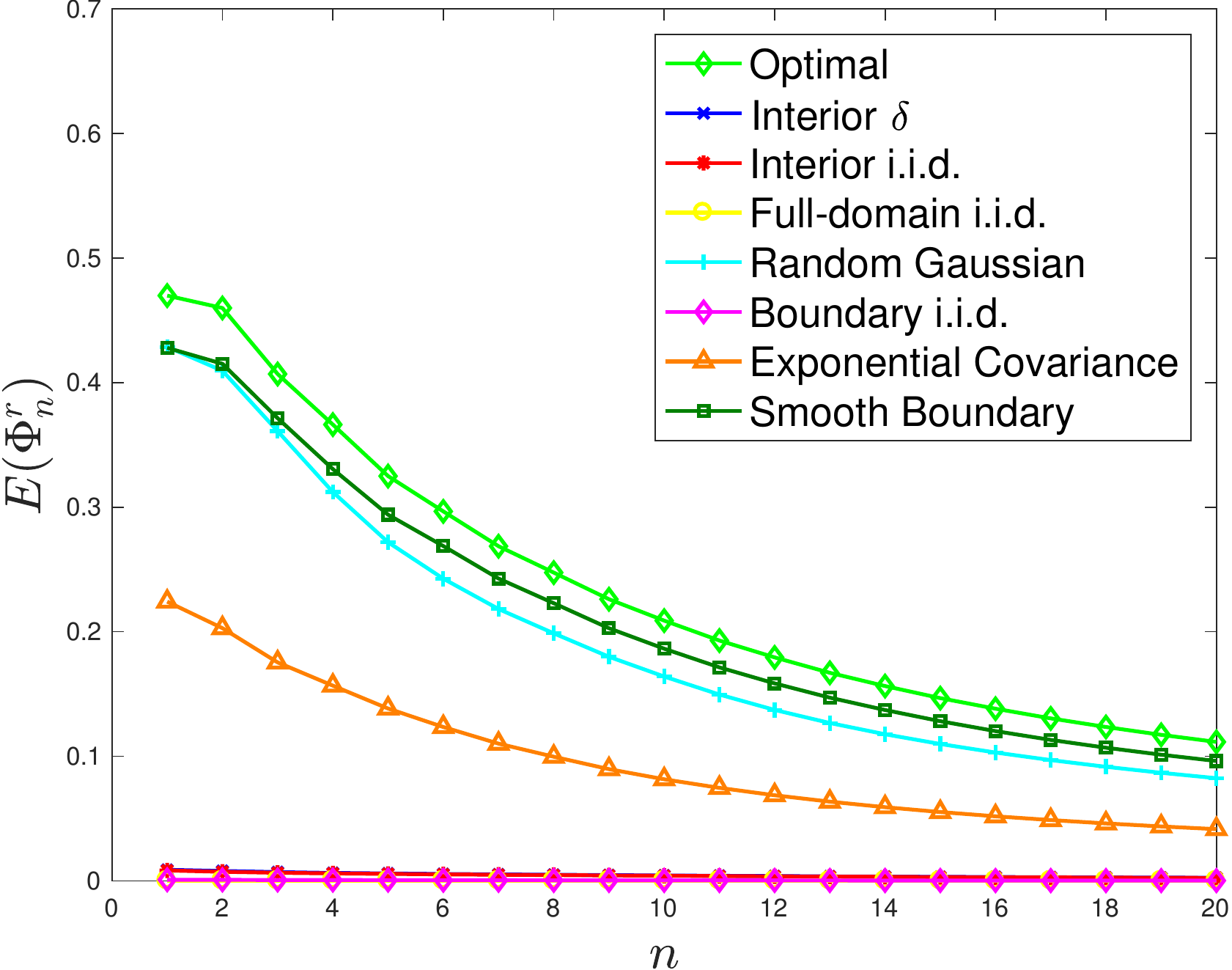}
  \caption{ \blueedit{\textbf{Left:} Eigenvalues obtained from
        the six different random sampling strategies, using $20$
        samples each, and the leading $20$ eigenvalues from the
        reference solution. Eigenvalues are computed from
        \eqref{eqn:ran_GEP} for the random sampling strategies and
        \eqref{eqn:GEP} for the reference solution. All methods show
        almost exponential decay of the eigenvalues, and the Random
        Gaussian and the Smooth Boundary are the best two sampling
        strategies in the sense that they match the reference
        eigenvalues most closely. \textbf{Right:} Energy $E(\Phi_n^r)$
        of optimal space and approximate subspaces from different
        random sampling strategies using $20$ samples. Energy is
        computed from \eqref{eqn:energy_gamma}. Again, Random Gaussian
        and the Smooth Boundary strategy achieve the minimal energy
        gap from the optimal space. } }
  \label{fig:EnergyDecay20}
\end{figure}

\begin{figure}[htbp]
  \centering
  \includegraphics[width=0.45\textwidth]{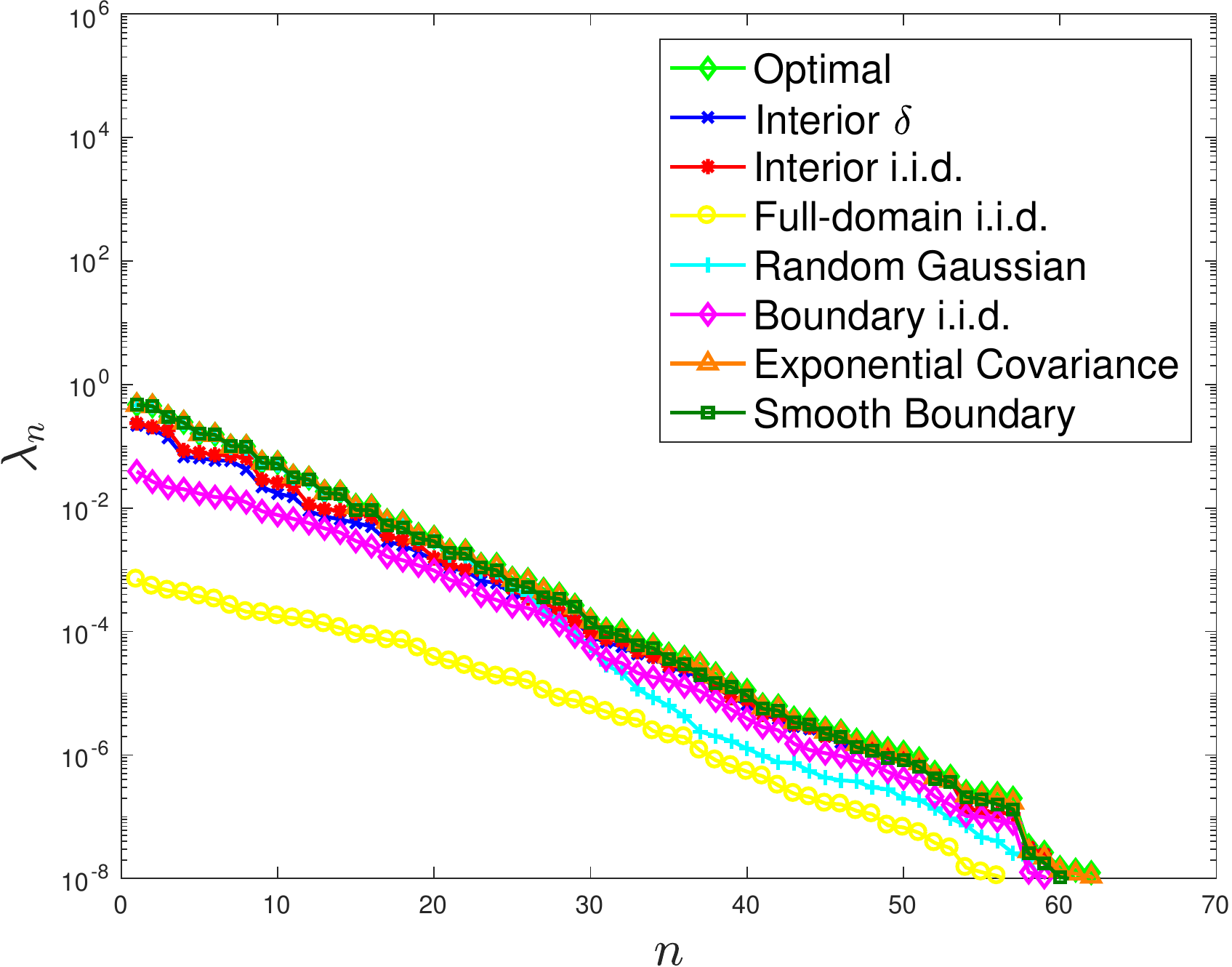}
  \includegraphics[width=0.45\textwidth]{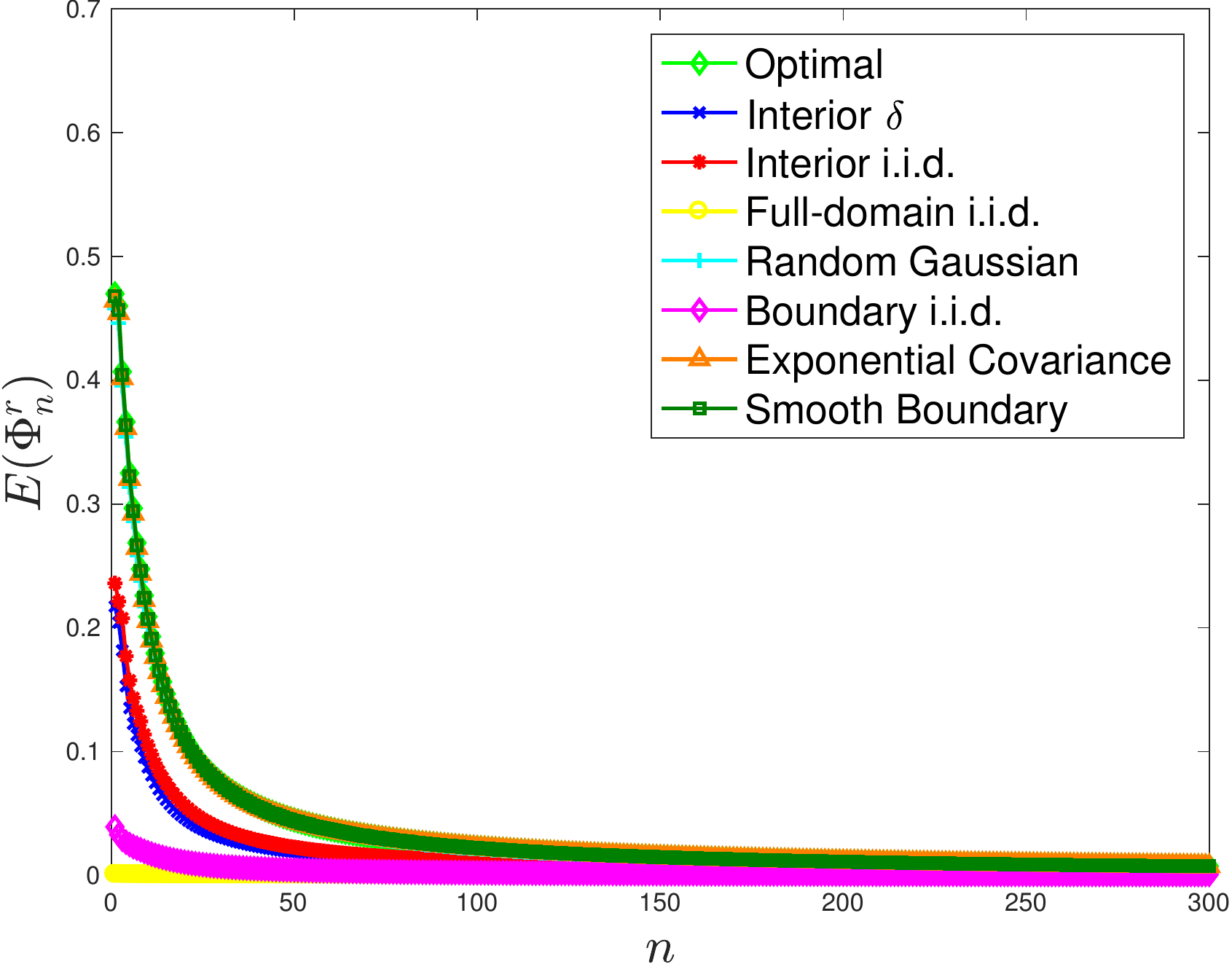}
  \caption{ \blueedit{Same as Figure~\ref{fig:EnergyDecay20}, but
    with $300$ random samples instead of $20$. Since $N_r\sim N_y$,
    the eigenvalues and energies obtained from random sampling tend to match the
    reference eigenvalues and optimal energy more closely. } }
  \label{fig:EnergyDecay300}
\end{figure}

Figure~\ref{fig:DistanceCompare20} shows the recovery of eigenspace by
random sampling procedures. We regard $\Phi_5$, defined
in~\eqref{eq:def.Phin},
as the optimal space (the space expanded by the five modes with
highest energies), and use $\Phi^r_m$ defined
in~\eqref{eqn:RanOptimalSpace} to approximate it, for
$m=5,6,\dotsc,20$. The vertical axis shows 
Kolmogorov distance
whose computation is described in Appendix~\ref{sec:appendixA}. As expected,
using more random modes leads to better recovery and thus smaller
Kolmogorov distance~\eqref{eqn:distance}. The plots show 
Kolmogorov distance decays roughly exponentially fast with respect to $m$, for all five
sampling strategies.  \blueedit{ Once again, the Random Gaussian
    and Smooth Boundary strategy are by far the best: $\Phi^r_{20}$
    approximates $\Phi_5$ with accuracy near $10^{-4}$ or
    $10^{-5}$. The other four strategies attain accuracies of around
    $10^{-1}$ to $10^{-2}$ for $m=20$.  }


\begin{figure}[htbp]
  \centering
  \includegraphics[width=0.6\textwidth]{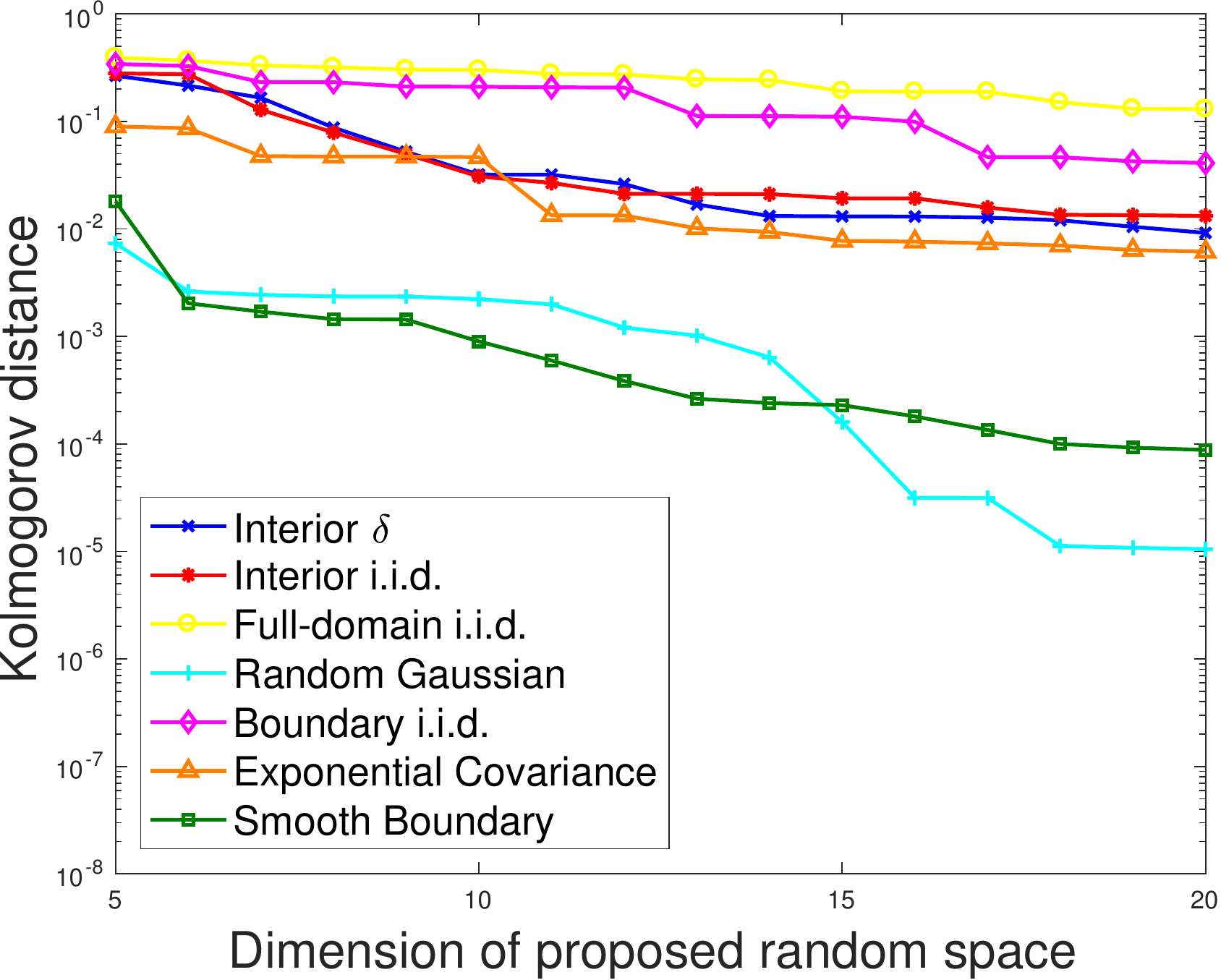}
  \caption{\keedit{Kolmogorov distance of $\Phi_5$ to $\Phi^r_m$, for
		$m=5,6,\dotsc,20$ sampling modes.}}
  \label{fig:DistanceCompare20}
\end{figure}

\subsection*{Eigenspace Recovery for the Random Gaussian Strategy}

Finally, we focus on the Random Gaussian sampling strategy, which is
clearly one of the most successful strategies. In
Figure~\ref{fig:RanGauEigFun}, we plot in the first row the high
energy modes $\{\phi_{1,2,3,4}\}$ for the reference solution, and in
the second row we plot the high energy modes $\{\phi^r_{1,2,3,4}\}$
obtained from the Random Gaussian strategy with $20$ samples. The
similarity is evident.

\begin{figure}[htbp]
  \centering
  \includegraphics[width=\textwidth,height = 200pt]{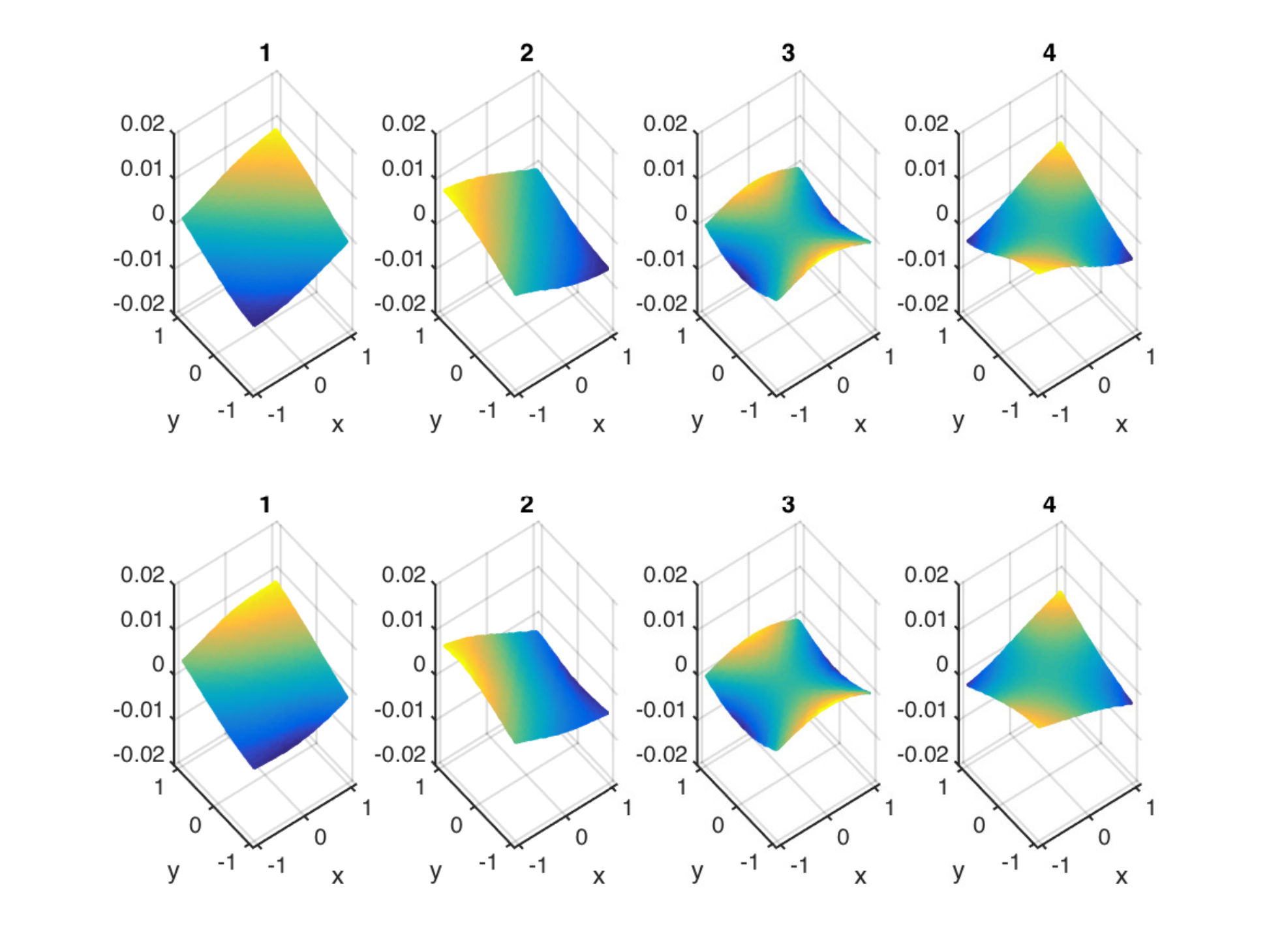}
  \caption{Recovery of high energy modes using Random Gaussian
    sampling strategy, with 20 samples. First row shows the first four
    modes of the reference solution; second row shows the first four
    modes obtained from the Random Gaussian sampling strategy.}
  \label{fig:RanGauEigFun}
\end{figure}


\section{Conclusion}

In this paper we study random sampling methods that approximate the
optimal solution space that attains Kolmogorov $n$-width in the
context of generalized finite element methods.  It is shown that
certain random sampling methods capture the main part of the local
solution spaces with high accuracy, and that efficiency can be
evaluated by the energy contained in the proposed random space.

\blueedit{ Numerical comparisons of seven different sampling strategies
  show that two strategies are superior: Random Gaussian sampling and
  Smooth boundary sampling.  } 

\blueedit{
\section*{Acknowledgments}
We are grateful to the two referees for their constructive comments that improved the paper considerably.}

\begin{appendix}
\section{Calculation of the Kolmogorov Distance}
\label{sec:appendixA}

Suppose we are given the optimal space 
\[
\Phi_m=\Span \Xmat, \; \mbox{where} \; \Xmat = \left[ \phi_1,\ldots,\phi_m \right], 
\]
and a proposed space
\[
\Phi^r_n=\Span \Ymat, \; \mbox{where} \; \Ymat = \left[ \phi^r_1,\ldots,\phi^r_n \right], \quad \mbox{where $n\geq m$},
\]
such that 
\[
\langle \phi_i,\phi_j\rangle_{\mathcal{E}(\omega^\ast)}=\delta_{ij} \quad \text{and}\quad \langle \phi_i^r,\phi_j^r\rangle_{\mathcal{E}(\omega^\ast)}=\delta_{ij}\,.
\] 
Recall the following definition of 
Kolmogorov distance from
\eqref{eqn:distance}:
\[
\keedit{d(\Phi^r_n,\Phi_m)} = \max_{\substack{x\in \Phi_m,\\ \|x\|_{\mathcal{E}(\omega^\ast)\leq 1}}} \,
\min_{y\in \Phi^r_n} \|x-y\|_{\mathcal{E}(\omega)}\,.
\]
To calculate $d(\Phi^r_n,\Phi_m)$ explicitly, we write $x=\Xmat\alpha$ for
some $\alpha\in\mathbb{R}^m$ and $y =\Ymat\beta$ for some
$\beta\in\mathbb{R}^n$.  The 
Kolmogorov distance is achieved when
$\|x\|_{\mathcal{E}(\omega^\ast)}=1$, which implies that
$\|\alpha\|=1$, \steveedit{where this $\|\cdot\|$ is the usual
  Euclidean norm on $\mathbb{R}^m$.} By expanding the objective, we have
\[
\begin{aligned}
\frac{1}{2}\|x-y\|_{\mathcal{E}(\omega)}^2 &=\frac{1}{2}\langle \Xmat\alpha-\Ymat\beta, \Xmat\alpha-\Ymat\beta\rangle_{\mathcal{E}(\omega)}\\
& = \frac{1}{2}\beta^\top \Ymat_A\beta -\alpha^\top C_A \beta +\frac{1}{2}\alpha^\top \Xmat_A \alpha\,,
\end{aligned}
\]
where $(\Ymat_{A})_{ij}=\langle
\phi^r_i,\phi^r_j\rangle_{\mathcal{E}(\omega)}$, $(\Xmat_{A})_{ij}=\langle
\phi_i,\phi_j\rangle_{\mathcal{E}(\omega)}$ and $(C_{A})_{ij}=\langle
\phi_j,\phi^r_i\rangle_{\mathcal{E}(\omega)}$. The minimizing value of $\beta$ is 
given explicitly by
\[
\beta = \Ymat_A^{-1}C_A \alpha\,,
\]
for which we have
\[
\frac{1}{2}d(\Phi^r_n,\Phi_m)^2 = \max_{\|\alpha\|=1} \alpha^\top (\Xmat_A-C_A^\top \Ymat_A^{-1}C_A)\alpha = \|\Xmat_A-C_A^\top \Ymat_A^{-1}C_A\|_{2}^2\,.
\]
The
Kolmogorov distance is therefore
\[
d(\Phi^r_n,\Phi_m) = \sqrt{2} \|\Xmat_A-C_A^\top \Ymat_A^{-1}C_A\|_{2}\,.
\]

{\color{blue}{
\section{Well-posedness of energy $E(\Zcal)$}
\label{sec:appendixB}
We show that energy defined in Definition~\ref{def:energy} is a
well-defined quantity. More specifically, given a $k$-dimensional space $\Zcal$ and two different $\Bmat$-orthonormal matrices $\Zmat_1,\Zmat_2\in\mathbb{R}^{n\times k}$ whose columns span the space $\Zcal$, we show that they yield the same value of $E(\Zcal)$:
	\begin{equation}\label{eqn:well-def}
E(\Zcal)=\frac{\Tr(\Zmat_1^\top \Amat\Zmat_1)}{\Tr (\Zmat_1^\top \Bmat\Zmat_1)}=\frac{\Tr(\Zmat_2^\top \Amat\Zmat_2)}{\Tr (\Zmat_2^\top \Bmat\Zmat_2)}\,.
\end{equation}
\begin{proof}
Since $\Zmat_1$ and $\Zmat_2$ share the column space $\Zcal$,
there must exist an invertible matrix $\Pmat\in \mathbb{R}^{k\times
  k}$ such that $\Zmat_1 = \Zmat_2 \Pmat$. 

We show first that $\Pmat$ is unitary. Because both $\Zmat_1$ and
$\Zmat$ have $\Bmat$-orthonormal columns, we have
\begin{equation*}
\Zmat^\top_1\Bmat \Zmat_1 = \Zmat^\top_2\Bmat \Zmat_2 = \Imat \,,
\end{equation*}
which implies that
\begin{equation*}
\Imat = \Zmat^\top_1\Bmat \Zmat_1 = (\Zmat_2\Pmat)^\top \Bmat (\Zmat_2\Pmat) =
  \Pmat^\top\Zmat^\top_2\Bmat \Zmat_2 \Pmat =\Pmat^\top
  \Pmat.
\end{equation*}
By definition of $\Zmat_1$ and $\Zmat_2$, we have
\[
\frac{\Tr(\Zmat^\top_1\Amat\Zmat_1)}{\Tr(\Zmat^\top_1\Bmat\Zmat_1)} =  \frac{\Tr(\Zmat^\top_1\Amat\Zmat_1)}{k}, \quad
\frac{\Tr(\Zmat_2^\top \Amat\Zmat_2)}{\Tr (\Zmat_2^\top \Bmat\Zmat_2)} = \frac{\Tr(\Zmat^\top_2\Amat\Zmat_2)}{k},
\]
so our claim \eqref{eqn:well-def} will hold if we can show that
$\Tr(\Zmat^\top_1\Amat\Zmat_1) = \Tr(\Zmat^\top_2\Amat\Zmat_2)$.
But this follows from
\begin{equation*}
\Tr (\Zmat^\top_1\Amat\Zmat_1) = \Tr\left((\Zmat_2\Pmat)^\top \Amat
(\Zmat_2\Pmat)\right) =
\Tr\left(\Pmat^\top\Zmat_2^\top\Amat\Zmat_2\Pmat\right) =
\Tr\left(\Zmat_2^\top\Amat\Zmat_2\Pmat\Pmat^\top\right) =
\Tr(\Zmat_2^\top \Amat\Zmat_2),
\end{equation*}
where the last equality comes from the orthonormality of $\Pmat$.
\end{proof}
}}
\end{appendix}

\bibliographystyle{siam}
\bibliography{elliptic_random_lowrank}
\end{document}